\documentclass[10pt]{amsart} 
\usepackage{amsmath,amsthm, amssymb, amsfonts, epsfig,amscd}
\usepackage[all]{xy}

\usepackage[mathscr]{eucal}
\usepackage{mathrsfs}
\usepackage{latexsym}
\usepackage{graphicx}
\usepackage[all]{xy}
\usepackage[table]{xcolor}
\usepackage[latin1]{inputenc}
\usepackage{tikz}
\usetikzlibrary{trees}

\DeclareGraphicsRule{.tif}{png}{.png}{`convert #1 `dirname
#1`/`basename #1 .tif`.png}


\theoremstyle{plain}
\newtheorem{thm}[subsection]{Theorem}
\newtheorem{lem}[subsection]{Lemma}
\newtheorem{prop}[subsection]{Proposition}
\newtheorem{cor}[subsection]{Corollary}
\newtheorem{lemma}[subsection]{Lemma}
\newtheorem{theorem}[subsection]{Theorem}

\theoremstyle{definition}
\newtheorem{rem}[subsection]{Remark}

\newtheorem{remark}[subsection]{Remark}

\def\Para{\ensuremath{\mathcal P}}

\def\vs{\vskip}
\def\ni{\noindent}

\begin{document}
\title[Cotangent bundle to the Flag variety-I]
{Cotangent bundle to the Flag variety-I}
\author[V. Lakshmibai]{V. Lakshmibai${}^{\dag}$}
\address{Northeastern University, Boston, USA}
   \email{lakshmibai@neu.edu}
\author[C.S. Seshadri]{C.S. Seshadri}
\address{Chennai Mathematical Institute, Chennai, India}
   \email{css@cmi.ac.in}
\author[R. Singh]{R. Singh}
\address{Northeastern University, Boston, USA}
   \email{singh.rah@husky.neu.edu}
\thanks{${}^{\dag}$ V.~Lakshmibai was partially supported by
NSA grant H98230-11-1-0197, NSF grant 0652386}

\maketitle

\begin{abstract}%
We show that there is a ${SL_n}$-stable closed subset of an
affine Schubert variety in the infinite dimensional Flag variety
(associated to the Kac-Moody group ${\widehat{SL_n}}$) which is a
natural compactification of the cotangent bundle to the
finite-dimensional Flag variety ${{SL_n/B}}$.
\end{abstract}

\section{Introduction} Let the base field $K$ be the field of
complex numbers. Consider a cyclic quiver with $h$ vertices and
dimension vector ${\underline d}=(d_1,\cdots, d_h)$:
$$\xymatrix{ 1 \ar[r] & 2 \ar[r] & \cdots & h-2 \ar[r] &
h-1\ar[dll]\\ &&h \ar[ull] & & }$$ Denote $V_i=K^{d_i}$. Let
$$Z=Hom(V_1,V_{2})\times \cdots\times Hom(V_h,V_{1}),\
GL_{{\underline d}}=\prod_{1\le i\le h}\,GL{(V_i)}$$ We have a
natural action of $GL_{{\underline d}}$ on $Z$: for
$g=(g_1,\cdots,g_h)\in GL_{{\underline d}},f=(f_1,\cdots,f_h)\in
Z$,
$$g\cdot f=(g_2f_1g_1^{-1},g_3f_2g_2^{-1},\cdots,g_1f_hg_h^{-1})$$
Let $$\mathcal{N}=\{(f_1,\cdots,f_h)\in Z\,|\,f_h\circ
f_{h-1}\circ\cdots\circ f_1:V_1\rightarrow V_1{\mathrm{\ is\
nilpotent}}\}$$
Note that $f_h\circ f_{h-1}\circ\cdots\circ f_1:V_1\rightarrow
V_1$ being nilpotent is equivalent to

\ni $f_{i-1}\circ f_{i-2}\circ\cdots\circ f_1\circ f_h
\circ\cdots\circ f_{i+1} f_i:V_i\rightarrow V_i$ being nilpotent.
Clearly $\mathcal{N}$ is $GL_{{\underline d}}$-stable. Lusztig
(cf.\cite{lus}) has shown that an orbit closure in $\mathcal{N}$
is canonically isomorphic to an open subset of a Schubert variety
in ${\widehat{SL}}_n/Q$, where $n=\sum_{1\le i\le h}\, d_i$, and
$Q$ is the parabolic subgroup of ${\widehat{SL}}_n$ corresponding
to omitting $\alpha_0,\alpha_{d_{1}},
\alpha_{d_{1}+d_{2}},\cdots,\alpha_{d_{1}+\cdots +d_{h-1}}$
($\alpha_i, 0\le i\le n-1$ being the set of simple roots for
${\widehat{SL}}_n$). Corresponding to $h=1$, we have that
$\mathcal{N}$ is in fact the variety of nilpotent elements in
$M_{d_{1},d_{1}}(K)$, and thus the above isomorphism identifies
$\mathcal{N}$ with an open subset of a Schubert variety
$X_{\mathcal{N}}$ in ${\widehat{SL}}_n/G_0$, $G_0$
being the maximal parabolic subgroup of ${\widehat{SL}}_n$
corresponding to ``omitting" $\alpha_0$.

Let now $h=2$
$$Z_0=\{(f_1,f_2)\in Z\,|\,f_2\circ f_1=0,f_1\circ f_2=0\}$$
Strickland (cf. \cite{st}) has shown that each irreducible
component of $Z_0$ is the conormal variety to a determinantal
variety in $M_{d_1,d_2}(K)$. A determinantal variety in $M_{d_1,d_2}(K)$
being canonically isomorphic to an open subset in a certain
Schubert variety in $G_{d_2,d_1+d_2}$ (the Grassmannian variety of
$d_2$-dimensional subspaces of $K^{d_1+d_2}$) (cf.\cite{g/p-2}),
the above two results of Lusztig and Strickland suggest a
connection between conormal varieties to Schubert varieties in the
(finite-dimensional) flag variety and the affine Schubert
varieties. This is the motivation for this article. 
Let $G=SL_n$.

Inspired by Lusztig's  embedding of $\mathcal N$ in $\widehat {SL_n}/Q$,
we define a family of maps $\psi_p:T^*G/B\hookrightarrow\widehat{SL}_n/\mathcal{B}$, parametrized by polynomials in one variable  with coefficients in $\mathbb C((t))$, and with $1$ as the constant term.
For a particular map $\phi$ (analogous to Lusztig's map) in this family, we find a $\kappa_0\in\widehat W$ such that the affine Schubert variety $X(\kappa_0)$ is $G_0$-stable ($G_0$ being as above, the
maximal parabolic subgroup of ${\widehat{SL}}_n$ corresponding to
``omitting" $\alpha_0$) and show that $\phi$ gives an embedding $T^*G/B\hookrightarrow X(\kappa_0)\subset{\widehat{SL}}_n/\mathcal{B}$.
We thus obtain a ${{SL_n}}$-stable closed subvariety of $X(\kappa_0)$ as a natural
compactification of $T^*G/B$ (cf. Theorem \ref{ctgt}). Let
$\pi:{\widehat{SL}}_n/\mathcal{B}\rightarrow
{\widehat{SL}}_n/G_0$  be the canonical projection. Then we show
that $\pi(T^*G/B)=\mathcal{N}$, the variety of nilpotent
matrices, and that $\pi\,|_{T^*G/B}:T^*G/B\rightarrow \mathcal{N}$
is in fact the Springer resolution.

Following the above ideas, Lakshmibai (cf. \cite{ls}) has obtained a stronger result for $T^*G_{d,n}$, the cotangent bundle to the Grassmannian variety $G_{d,n}$. She shows that there is an embedding $\chi$ (analogous to $\phi$) of $T^*G_{d,n}$ inside a Schubert variety $X(\iota)\subset\widehat {SL_n}/\mathcal Q_d$ (where $\mathcal Q_d$ is the two step parabolic subgroup of $\widehat {SL_n}$ corresponding to omitting $\alpha_0,\alpha_d$) such that $X(\iota)$ \textbf{is in fact a compactification of}  $T^*G_{d,n}$.
The result of \cite{ls} has been generalized to $T^*G/P$ in \cite{comin}, $G/P$ being a cominuscule Grasssmannian variety.

It would be interesting to know if the result of \cite{ls} could be achieved replacing $P$ with $B$, for a suitable generalization of $\chi$.
We show in \S\ref{counter} that this is not possible for any $\psi_p$ in the above family, even when $n=3$. 
We think that our result about the embedding $\phi:T^*G/B\hookrightarrow X(\kappa_0)$ identifying a certain ${{SL_n}}$-stable closed subvariety of $X(\kappa_0)$ as a natural
compactification of $T^*G/B$ is the best possible in relating $T^*G/B$ and affine Schubert varieties in $\widehat{SL_n}/\mathcal B$.

The results of this paper open up other related problems like,  the study of 
line bundles on $T^*G/B, G=SL_n$ (using the embedding of $T^*G/B$ into $X(\kappa_0)$, and realizing line bundles on $T^*G/B$ as restrictions of suitable line bundles on $X(\kappa_0)$), establishing similar embeddings of the cotangent bundles to partial flag varieties $G/Q$ ($G$ semi-simple and $Q$ a parabolic subgroup) etc. Further, the facts that conormal varieties to Schubert varieties in $G/B$ are closed subvarieties of  $T^*G/B$, and that the affine Schubert variety $X(\kappa_0)$ contains a $G$-stable closed subvariety which is a natural
compactification of $T^*G/B$, suggest similar compactifications
for conormal varieties to Schubert varieties in $G/B$ (by
suitable affine Schubert varieties in
${\widehat{SL}}_n/\mathcal{B})$; such a realization could lead to important consequences such as a knowledge of  the equations of the conormal varieties (to Schubert varieties) as subvarieties of  the cotangent bundle. These problems will be dealt with in a subsequent paper.

Regarding results on similar compactifications, we mention Mirkovic-Vybornov's work (cf. \cite{m-v}), where the authors construct compactifications of Nakajima's quiver varieties of type $\mathbf A$ inside affine Grassmannians of type $\mathbf A$. Also, Manivel and Michalek (\cite{m-m}) have recently studied the local geometry of tangential varieties (which are compactifications of the tangent bundle) to cominuscule Grassmannians.

The sections are organized as follows. In \S \ref{prelim}, we fix
notation and recall \emph{affine Schubert varieties}. In \S
\ref{elt}, we introduce the elements $\kappa$ and $\kappa_0$ (in ${\widehat{W}}$,
the affine Weyl group), and prove some properties of $\kappa$. In
\S \ref{main}, we prove a crucial result on $\kappa$ needed for
realizing the embeddings of $\mathcal{N}$ and $T^*SL_n/B$ inside
${\widehat{SL}}_n/G_0$ and ${\widehat{SL}}_n/\mathcal{B}$
respectively. In \S \ref{lumap}, we spell out Lusztig's
isomorphism which identifies $\mathcal{N}$ with an open subset of
$X_{G_0}(\kappa)$ (inside ${\widehat{SL}}_n/G_0$).
In \S \ref{bundle}, using the map $\phi:T^*G/B\rightarrow \widehat{SL}_n/\mathcal B$ as above and the natural projection $\widehat{SL}_n/\mathcal B\rightarrow \widehat{SL}_n/G_0$, we recover the Springer resolution of $\mathcal N$;
we also prove the main result that $\phi$ identifies an $SL_n$-stable closed subvariety of $X(\kappa_0)$ as a compactification of $T^*G/B$.
In \S \ref{counter}, we show that it is not possible, for any choice in the family $\psi_p$, to realize an affine Schubert variety (in $\widehat{SL}_3/\mathcal B$) as a compactification of the cotangent bundle $T^*SL_3/B$.

\vs.2cm\ni\textbf{Acknowledgement:} The first author wishes to
thank Chennai Mathematical Institute for the hospitality extended
to her during her visit in Winter 2014 when this work was started. 
The authors thank the Macaulay2 team for providing a 
web client to the same at http://habanero.math.cornell.edu:3690/
\section{Affine Schubert varieties}\label{prelim} Let $K=\mathbb{C}, F=K((t))$,
the field of Laurent series, $A=K[[t]]$. Let $G$ be a
semi-simple algebraic group over $K$, $T$ a maximal torus in $G$,
$B$ a Borel subgroup, $B\supset T$, and let $B^{-}$ be the Borel subgroup opposite to $B$. 
Let $\mathcal{G}=G(F)$. The natural inclusion
$K\hookrightarrow A\hookrightarrow F$ induces an inclusion
$$G\hookrightarrow G(A)\hookrightarrow\mathcal{G}$$ The
natural projection
$A\rightarrow K$ given by $t\mapsto 0$
induces a homomorphism
$$\pi:G(A)\rightarrow G$$ 
The group $\mathcal{B}:=\pi^{-1}(B)$ is a Borel subgroup of $\mathcal G$. 


\subsection{Bruhat decomposition:}  Let ${\widehat W}=N(K[t,t^{-1}])/T$, the \emph{affine Weyl group} of
$G$ (here, $N$ is the normalizer of $T$ in $G$). 
The group $\widehat W$ is a Coxeter group (cf. \cite{kac}).
We have that $$G(F)=\dot\cup_{w\in{\widehat W}}\mathcal{B}w\mathcal{B},
G(F)/\mathcal{B}=\dot\cup_{w\in{\widehat
W}}\mathcal{B}w\mathcal{B}(mod\,\mathcal{B})$$ For $w\in{\widehat
W}$, let $X(w)$ be the \emph{affine Schubert variety} in
$G(F)/\mathcal{B}$:
$$X(w)=\dot\cup_{\tau\le w}\mathcal{B}\tau\mathcal{B}(mod\,
\mathcal{B})$$ It is a projective variety of dimension $\ell(w)$.

\subsection{Affine Flag variety,  Affine Grassmannian:}

Let $G=SL(n)$, $\mathcal{G}=G(F)$, $G_0=G(A)$. 
We say $g\in\mathcal G$ is integral if and only if $g\in G_0$, i.e viewed as $G$-valued meromorphic function on $\mathbb C$, it has no poles at $t=0$.
The homogeneous space $\mathcal{G}/\mathcal{B}$ is the \emph{affine Flag variety}, and $\mathcal{G}/G_0$ is the \emph{affine Grassmannian}. 
Further, 
$$\mathcal{G}/G_0=\dot\cup_{w\in{\widehat W}^{G_0}}\mathcal{B}w\,G_0 (mod\,G_0)$$ 
where ${\widehat W}^{G_0}$ is the set of minimal representatives in ${\widehat W}$ of ${\widehat W}/W_{G_0}$.

\ni  Let $${\widehat
{Gr(n)}}=\{A\texttt{-lattices\ in\ } F^n \}$$ Here, by an
$A$-lattice in $F^n$, we mean a free $A$-submodule of $F^n$ of
rank $n$. Let $E$ be the standard lattice, namely, the $A$-span of
the standard $F$-basis $\{e_1,\cdots,e_n\}$ for $F^n$. For $V\in
{\widehat {Gr(n)}}$, define
$$\mathrm{vdim}(V):=dim_K(V/V\cap E)-dim_K(E/V\cap E)$$
 One refers to $\mathrm{vdim}(V)$ as the
 \emph{virtual dimension of $V$}. For $j\in \mathbb{Z}$ denote
$${\widehat {Gr_j(n)}}=\{V\in{\widehat {Gr(n)}}\,|\,
\mathrm{vdim}(V)=j\}$$ Then ${\widehat {Gr_j(n)}},j\in\mathbb{Z}$
give the connected components of ${\widehat {Gr(n)}}$. We have a
transitive action of $GL_n(F)$ on ${\widehat {Gr(n)}}$ with
$GL_n(A)$ as the stabilizer at the standard lattice $E$. Further,
let $\mathcal{G}_0$ be the subgroup of $GL_n(F)$, defined as,
$$\mathcal{G}_0=\{g\in GL_n(F)\,|\,\mathrm{ord(det\, }g)=0\}$$
(here, for a $f\in F$, say $f=\sum\,a_it^i$, \emph{order f} is the
smallest $r$ such that $a_r\ne 0$). Then $\mathcal{G}_0$ acts
transitively on ${\widehat {Gr_0(n)}}$ with $GL_n(A)$ as the
stabilizer at the standard lattice $E$. Also, we have a transitive
action of $SL_n(F)$ on ${\widehat {Gr_0(n)}}$ with $SL_n(A)$ as
the stabilizer at the standard lattice $E$. Thus we obtain the
identifications:
$$\begin{gathered}GL_n(F)/GL_n(A)\simeq{\widehat {Gr(n)}}\\
\mathcal{G}_0/GL_n(A)\simeq{\widehat {Gr_0(n)}},
SL_n(F)/SL_n(A)\simeq{\widehat {Gr_0(n)}}
\end{gathered}\leqno{(*)}$$ In particular, we obtain
$$\mathcal{G}_0/GL_n(A)\simeq SL_n(F)/SL_n(A)\leqno{(**)}$$

\subsection{Generators for ${\widehat W}$:}\label{gen} 
Recall that the Weyl group $\widehat W=N(K[t,t^{-1}])/T$.
Let $R$ (resp. $R^+$) be the set of roots (resp. positive roots) of $G$ relative to $B$, and let $\delta$ be the basic imaginary root of the affine Kac-Moody algebra of type $\widehat{\mathbf A}_{n-1}$ given by (cf. \cite{kac})
$$\delta=\alpha_0+\theta=\alpha_0+\cdots+\alpha_{n-1}$$
The set of real roots of $\mathcal{G}$ is given by $\left\{r\delta+\beta\mid r\in\mathbb Z, \beta\in R\right\}$, and the set of positive roots of $\mathcal{G}$ is given by $\left\{r\delta+\beta\mid r>0, \beta\in R\right\}\dot\cup R^+$ (cf. \cite{kac}).
Following the notation in \cite{kac}, we shall work with the set of generators for
${\widehat W}$ given by $\{s_0, s_1,\cdots, s_{n-1}\}$, where
$s_i, 0\le i\le n-1$ are the reflections with respect to
$\alpha_i, 0\le i\le n-1$. Note that $\{\alpha_i, 1\le i\le n-1\}$
is simply the set of simple roots of $SL_n$ (with respect to the
Borel subgroup $B$). 
In particular, the Weyl group $W$ of $SL_n(\mathbb C)$ is simply the subgroup of $\widehat W$ generated by $\{s_1,\cdots,s_{n-1}\}$. 

\def\coroots{\ensuremath Q}
\subsection{The Affine Presentation:}
\label{affine}
The generators $s_i,\, 1\leq i \leq n-1$ have the following canonical lifts to $N(K[t,t^{-1}])$:
$s_i$ is the permutation
matrix $(a_{rs})$, with $a_{jj}=1,j\ne i,i+1,\  a_{i\,i+1}=1,
a_{i+1\,i}=-1$, and all other entries are $0$. 
A canonical lift for $s_0$ is given by
$$\begin{pmatrix}
0&0&\cdots & t^{-1}\\
0&1&\cdots &0\\
\vdots & \vdots & \vdots & \vdots\\
0&\cdots &1 &0\\
-t &0&0&0
\end{pmatrix}$$
Let $s_\theta\in W$ be the reflection with respect to the longest root $\theta$ in $\mathbf A_{n-1}$ given by $\theta=\alpha_1+\cdots+\alpha_{n-1}$.
Let $L$ (resp. \coroots) be the root (resp. coroot) lattice of $\mathfrak{sl}_n(=\operatorname{Lie}(SL_n))$, and let $\langle\,,\,\rangle$ be the canonical pairing on $L\times\coroots$. 
Consider $\theta^\vee\in\coroots$ given by $\theta^\vee=\alpha_1^\vee+\cdots+\alpha_{n-1}^\vee$.
There exists (cf. \cite{kumar}, \S 13.1.6) a group isomorphism $\widehat{W}\rightarrow W\ltimes\coroots$ given by \begin{align*}
    s_i&\mapsto s_i                                 &\text{ for }1\leq i\leq n-1\\
    s_0&\mapsto s_\theta\lambda_{-\theta^{\vee}}    &
\end{align*}
where we write $\lambda_q$ for $(\operatorname{id},q)\in W\ltimes\coroots$.
In particular, we get $s_0s_\theta\mapsto\lambda_{\theta^\vee}$, which we use to compute a lift of $\lambda_{\theta^\vee}$ to $N(K[t,t^{-1}])$: \begin{align*} 
\begin{pmatrix}
0&0&\cdots & t^{-1}\\
0&1&\cdots &0\\
\vdots & \vdots & \vdots & \vdots\\
0&\cdots &1 &0\\
-t &0&0&0
\end{pmatrix}
\begin{pmatrix}
0&0&\cdots &-1\\
0&1&\cdots &0\\
\vdots & \vdots & \vdots & \vdots\\
0&\cdots &1 &0\\
1 &0&0&0
\end{pmatrix}
=\begin{pmatrix}
t^{-1}&0&\cdots &0\\
0&1&\cdots &0\\
\vdots & \vdots & \vdots & \vdots\\
0&\cdots &1 &0\\
0 &0&0&t
\end{pmatrix}
\end{align*}
Consider the element $w\in W$ corresponding to $(1,i)(i+1,n)\in S_n$, and 
observe that $w(\theta^\vee)=\alpha_i^\vee$, the $i^{th}$ simple coroot.  
It follows that a lift of $\lambda_{\alpha_i^\vee}=w\lambda_{\theta^\vee}w^{-1}$ is given by\begin{align*}
w\begin{pmatrix}
t^{-1}&0&\cdots &0\\
0&1&\cdots &0\\
\vdots & \vdots & \vdots & \vdots\\
0&\cdots &1 &0\\
0 &0&0&t
\end{pmatrix}w^{-1}
=\begin{pmatrix}
 \ddots& &&\\
 & t^{-1} & &\\
  & & t&\\
  & &  &\ddots\\
\end{pmatrix}
\end{align*}
where in the matrix on the right hand side, the dots are $1$, and the off-diagonal entries are $0$, i.e., the matrix on the right hand side is the diagonal matrix with $i,\,(i+1)$-th entries being $t^{-1},t$ respectively, and all other diagonal entries being $1$.
\\
\\
The (Coxeter) length of $\lambda_q$ is given by the following formula (cf. \cite{kumar}, \S13.1.E(3)):$$
    l(\lambda_q)=\sum\limits_{\alpha\in R^+}\lvert\alpha(q)\rvert,\qquad q\in\coroots$$ 
where $\alpha(q):=\langle\alpha,q\rangle$. 
The action of $\lambda_q$ on the root system of $\mathcal G$ is determined by the following formulae (cf. \cite{kumar}, \S13.1.6):\begin{align*}
    \lambda_q(\alpha)&=\alpha-\alpha(q)\delta,\qquad\text{ for }\alpha\in R,q\in\coroots\\
    \lambda_q(\delta)&=\delta
\end{align*} 
In particular, for $\alpha\in R^+$, $\lambda_q(\alpha)>0$ if and only if $\alpha(q)\leq 0$.
\begin{cor}
\label{count}
For $\alpha\in R^+,\,q\in\coroots$, $l(\lambda_qs_\alpha)>l(\lambda_q)$ if and only if $\alpha(q)\leq 0$.
\end{cor}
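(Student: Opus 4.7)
The plan is to reduce this corollary to the standard Coxeter-theoretic length criterion for multiplication by a reflection, and then exploit the calculation of $\lambda_q(\alpha)$ already recorded just above the statement.

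First I would invoke the following general fact about Coxeter groups (which applies to $\widehat{W}$, since it is a Coxeter group by Kac, cited earlier): for any element $w$ and any reflection $s_\beta$ associated to a positive real root $\beta$, one has $l(w s_\beta)>l(w)$ if and only if $w(\beta)>0$. This is a consequence of the identification of the (right) inversion set of $w$ with the positive roots sent to negative ones by $w$, and of the strong exchange condition. I would cite this from \cite{kumar} or \cite{kac} to avoid re-proving it.

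Next, I would apply this criterion with $w=\lambda_q$ and $\beta=\alpha\in R^+\subset R^+_{\mathrm{aff}}$, where $R^+_{\mathrm{aff}}=\{r\delta+\gamma\mid r>0,\gamma\in R\}\dot\cup R^+$ is the set of positive affine real roots. The question reduces to determining the sign of $\lambda_q(\alpha)$. By the formula recalled just before the corollary,
\[
\lambda_q(\alpha)=\alpha-\alpha(q)\delta.
\]
A short case analysis then finishes the job. If $\alpha(q)\le 0$, write $r=-\alpha(q)\ge 0$, so that $\lambda_q(\alpha)=r\delta+\alpha$; this lies in $R^+$ when $r=0$ and in the set $\{r\delta+\gamma\mid r>0\}$ when $r>0$, hence is positive in either case. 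If $\alpha(q)>0$, then $\lambda_q(\alpha)=-\bigl(\alpha(q)\delta+(-\alpha)\bigr)$, and since $\alpha(q)\delta+(-\alpha)$ is of the form $r\delta+\gamma$ with $r>0$, it is a positive affine root, so $\lambda_q(\alpha)$ is negative. Combining the two cases, $\lambda_q(\alpha)>0$ if and only if $\alpha(q)\le 0$.

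There is really no serious obstacle here; the only thing to be mildly careful about is that the Coxeter-theoretic length criterion is usually stated for simple reflections, and we are applying it to a reflection $s_\alpha$ that need not be simple in $\widehat{W}$. One must therefore appeal to the version for arbitrary reflections (equivalently, to the description of right descents via the inversion set), which is standard for Coxeter groups. Once that is in hand, the corollary follows immediately by combining it with the sign analysis of $\alpha-\alpha(q)\delta$ above.
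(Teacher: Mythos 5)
Your proposal is correct and follows essentially the same route as the paper: the paper's one-line proof invokes exactly the length criterion $l(w s_\alpha) > l(w) \iff w(\alpha) > 0$ applied to $w = \lambda_q$, together with the sign analysis of $\lambda_q(\alpha) = \alpha - \alpha(q)\delta$ that the paper records in the sentence immediately preceding the corollary. You merely spell out the case analysis that the paper leaves implicit, and you correctly flag the (minor but real) point that the criterion must be taken in its form for arbitrary reflections, not just simple ones.
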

\begin{proof}
Follows from the equivalence $ws_\alpha>w$ if and only if $w(\alpha)>0$, applied to $w=\lambda_q$.
\end{proof}


\section{The element $\kappa_0$}\label{elt} 
Our goal is to give a compactification of the cotangent bundle $T^*G/B$ as a (left) $SL_n$ stable subvariety of the affine Schubert variety $X(\kappa_0)$, where $\kappa_0$ is as defined below:\begin{align*}
    \tau&:=s_{n-1}\cdots s_2s_1s_0 \\
    \kappa&:=\tau^{n-1}\\
    \kappa_0&:=w'\tau^{n-1}
\end{align*}
where $w'$ is the longest element in the Weyl group generated by $s_1,\cdots s_{n-2}$.
We first prove some properties of $\kappa$ and $\tau$ which are consequences of the braid relations
$$\begin{gathered}s_is_{i+1}s_{i}=s_{i+1}s_{i}s_{i+1},
0\le i\le n-2,\\
s_0s_{n-1}s_{0}=s_{n-1}s_{0}s_{n-1}\end{gathered}$$ 
and the commutation relations:
$$s_is_j=s_js_i, 1\le i,j\le n-1, |i-j|>1,\ \  s_0s_i=s_is_0,
2\le i\le n-2$$

\subsection{Some Facts:}\label{facts}

\ni\textbf{Fact 1:} $\tau(\delta)=\delta$ 

\ni\textbf{Fact 2:}
$\tau(\alpha_1+\cdots+\alpha_{n-1})=2\delta+\alpha_{n-1}$

\ni\textbf{Fact 3:} $\tau(r\delta+\alpha_i+\cdots+\alpha_{n-1})=
(r+1)\delta+\alpha_{i-1}+\alpha_i+\cdots+\alpha_{n-1},2\le i\le
n-1,r\in\mathbb{Z}_+$

\ni\textbf{Fact 4:} $s_{n-1}\cdots
s_{j+1}(\alpha_j)=\alpha_{j}+\alpha_{j+1}+\cdots+\alpha_{n-1},j\ne
0, n-1$

\ni\textbf{Fact 5:} $s_{n-1}\cdots
s_{1}(\alpha_0)=\delta+\alpha_{n-1}$

\ni\textbf{Fact 6:}
$\tau(\alpha_{n-1})=\delta+\alpha_{n-2}+\alpha_{n-1}$ (a special
case of Fact 3 with $r=0,i=n-1$)

\ni\textbf{Fact 7:} $\tau(\alpha_1)=\alpha_0+\alpha_{n-1}$

\ni\textbf{Fact 8:} $\tau(\alpha_i)=\alpha_{i-1},i\ne 1,n-1$

\ni\textbf{Fact 9:} $\tau(\alpha_0+\alpha_{n-1})=\alpha_{n-2}$

\begin{remark}\label{cyclic}
Facts 7, 8, 9 imply that $(\alpha_{n-1}+\alpha_0,\alpha_{n-2},\alpha_{n-3},\ldots,\alpha_1)$ is a cycle of order $n-1$ for $\tau$.
In particular, each of these roots is fixed by $\kappa$.
\end{remark}

\subsection{A reduced expression for $\kappa$}\label{kap} Let $\kappa$ be the element in
${\widehat{W}}$ defined as above. We may write $\kappa=\tau_1\cdots\tau_{n-1}$, where
$\tau_i$'s are equal, and equal to $\tau(=s_{n-1}\cdots
s_2s_1s_0)$ (we have a specific purpose behind writing $\kappa$ as
above).
\begin{lem}\label{reduced}
  The expression $\tau_1\cdots\tau_{n-1}$
for $\kappa$ is reduced.
\end{lem}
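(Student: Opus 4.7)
The plan is to compute $\ell(\kappa)$ directly using the affine presentation of $\widehat W$ from \S\ref{affine}, and then conclude by comparing with the number of simple reflections appearing in $\tau_1\cdots\tau_{n-1}$. Once expanded, this expression involves exactly $(n-1)\cdot n = n(n-1)$ simple reflections, so it is reduced if and only if $\ell(\kappa) = n(n-1)$.

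First I would transport $\tau$ through the isomorphism $\widehat W\to W\ltimes\coroots$, using $s_0\mapsto s_\theta\lambda_{-\theta^\vee}$ and $s_i\mapsto s_i$ for $i\geq 1$. This gives $\tau = u\,\lambda_{-\theta^\vee}$, where $u := s_{n-1}\cdots s_1 s_\theta \in W$. Viewing $W$ as $S_n$ (so $s_\theta = (1,n)$), a direct computation should show that $u$ fixes $n$ and cycles $1\mapsto n-1\mapsto n-2\mapsto\cdots\mapsto 2\mapsto 1$. In particular $u$ has order $n-1$ in $W$, which is precisely the exponent used to define $\kappa$.

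Next, using the semidirect product rule $(v_1\lambda_{q_1})(v_2\lambda_{q_2}) = v_1v_2\,\lambda_{v_2^{-1}(q_1)+q_2}$, an easy induction on $k$ gives
$$\tau^k = u^k\,\lambda_{-\sum_{j=0}^{k-1} u^{-j}(\theta^\vee)}.$$
Specializing to $k=n-1$ and using $u^{n-1}=e$, we get $\kappa = \lambda_q$ for $q := -\sum_{j=0}^{n-2} u^{-j}(\theta^\vee)$. Identifying $\coroots$ with the sum-zero sublattice of $\mathbb Z^n$ via $\alpha_i^\vee\leftrightarrow e_i-e_{i+1}$, so that $\theta^\vee = e_1-e_n$, the cycle structure of $u^{-1}$ gives $u^{-j}(\theta^\vee) = e_{j+1}-e_n$ for $0\leq j\leq n-2$, whence $q = (-1,-1,\ldots,-1,\,n-1)$.

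Finally, applying the length formula $\ell(\lambda_q) = \sum_{\alpha\in R^+} |\alpha(q)|$: for a positive root $\alpha = e_i-e_j$ with $i<j$, one has $\alpha(q) = q_i-q_j$, which is $0$ whenever $j<n$ and equals $-n$ for the $n-1$ roots $e_i-e_n$. Hence $\ell(\kappa) = (n-1)\cdot n$, matching the number of generators in $\tau_1\cdots\tau_{n-1}$, so the expression is reduced. I expect the main (mild) obstacle to be the combinatorial bookkeeping required to identify $u$ as a permutation and to handle the semidirect-product convention carefully; once the explicit $q$ is in hand, the length formula delivers the result immediately.
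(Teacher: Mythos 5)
Your argument is correct, and it takes a genuinely different route from the paper's. The paper applies the standard root-positivity criterion for reduced words directly: it shows, via a careful case analysis driven by Facts 1--9 of \S\ref{facts}, that each partial prefix of $\tau_1\cdots\tau_{n-1}$ sends the next simple root in the word to a positive real root. You instead pass through the affine presentation $\widehat W \cong W\ltimes \coroots$ of \S\ref{affine}, identify $\kappa$ as the pure translation $\lambda_q$ with $q = (-1,\ldots,-1,n-1)$, and read off $\ell(\kappa) = n(n-1)$ from the length formula $\ell(\lambda_q)=\sum_{\alpha\in R^+}|\alpha(q)|$; since the word has exactly $n(n-1)$ letters, it must be reduced. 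I checked the key steps: $u = s_{n-1}\cdots s_1 s_\theta$ indeed fixes $n$ and cycles $(1,n-1,n-2,\ldots,2)$, so $u^{n-1}=e$; the semidirect-product iteration $\tau^k = u^k\lambda_{-\sum_{j<k}u^{-j}(\theta^\vee)}$ is correct under the convention $\lambda_q v = v\lambda_{v^{-1}(q)}$ implicit in the paper; and the resulting $q$ is consistent with the paper's explicit lift $\kappa=\mathrm{diag}(t,\ldots,t,t^{-(n-1)})$. Your approach is shorter and conceptually cleaner, and it makes the identity $\ell(\kappa)=n(n-1)$ (Corollary \ref{length}) immediate rather than a byproduct. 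The paper's more laborious route has the compensating advantage that the intermediate root computations (in particular Remark \ref{cyclic} and Case 1 of the proof) are reused later in Lemmas \ref{minimal}, \ref{stable}, and \ref{sn2}; your computation of $q$ would deliver some of that information too (e.g.\ $\kappa(\alpha_i)=\alpha_i-\alpha_i(q)\delta=\alpha_i$ for $1\le i\le n-2$), but it was a deliberate design choice in the paper to build the downstream lemmas on the Facts list.
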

\begin{proof}

\ni\textbf{Claim:} $\tau_1\cdots\tau_{i}s_{n-1}\cdots
s_{j+1}(\alpha_j),1\le i\le n-2, 0\le j\le n-2,
\tau_1\cdots\tau_{i}(\alpha_{n-1})$, 

\ni $1\le i\le n-2$ are positive
real roots.

Note that the Claim implies the required result. We divide the
proof of the Claim into the following three cases.

\ni\textbf{Case 1:} \emph{To show:}
$\tau_1\cdots\tau_{i}(\alpha_{n-1}),1\le i\le n-2$ is a positive
real root.

\ni We have

\ni $\tau_1\cdots\tau_{i}(\alpha_{n-1})$

\ni $=\tau_1\cdots\tau_{i-1}(\delta+\alpha_{n-2}+\alpha_{n-1})$
(cf. \S \ref{facts}, Fact 6)

\ni $=\tau_1\cdots\tau_{i-2}(2\delta+\alpha_{n-3}+\alpha_{n-2}+
\alpha_{n-1})$ (cf. \S \ref{facts}, Fact 3)

\ni $=\tau_1\cdots\tau_{i-k}(k\delta+\alpha_{n-k-1}+\cdots+
\alpha_{n-1}), 0\le k\le i-1$ (cf. \S \ref{facts}, Fact 3)

\vs.2cm\ni Note that $k\le i-1$ implies that $n-k-1\ge n-i\ge 2$,
and hence we can apply \S \ref{facts}, Fact 3. Corresponding to
$k=i-1$, we obtain
$\tau_1\cdots\tau_{i}(\alpha_{n-1})=\tau_1((i-1)\delta+
\alpha_{n-i}+\cdots+\alpha_{n-1}$ ). Hence once again using \S
\ref{facts}, Fact 3, we obtain

 $$\tau_1\cdots\tau_{i}(\alpha_{n-1})=
i\delta+\alpha_{n-i-1}+\cdots+\alpha_{n-1},\ 1\le i\le n-2$$ (note
that for $1\le i\le n-2$, $n-i-1\ge 1$).

\ni\textbf{Case 2:} \emph{To show:}
$\tau_1\cdots\tau_{i}s_{n-1}\cdots s_{1}(\alpha_{0}), 1\le i\le
n-2$ is a positive real root.

\ni We have

\ni $\tau_1\cdots\tau_{i}s_{n-1}\cdots s_{1}(\alpha_{0})$

\ni $=\tau_1\cdots\tau_{i}(\delta+\alpha_{n-1})$ (cf. \S
\ref{facts}, Fact 5)

\ni $=\tau_1\cdots\tau_{i-1}(2\delta+\alpha_{n-2}+\alpha_{n-1})$
(cf. \S \ref{facts}, Fact 6)

\ni $=\tau_1\cdots\tau_{i-k}((k+1)\delta+\alpha_{n-k-1}+
\cdots+\alpha_{n-1}),0\le k\le i-1$ (cf. \S \ref{facts}, Fact 3)

\vs.2cm\ni Note that as in Case 1, for $k\le i-1$, we have,
$n-k-1\ge 2$, and therefore \S \ref{facts}, Fact 3 holds.
Corresponding to $k=i-1$, we have,

\ni $\tau_1\cdots\tau_{i}s_{n-1}\cdots
s_{1}(\alpha_{0})=\tau_1(i\delta+\alpha_{n-i}+\cdots+\alpha_{n-1})$.
 Hence once again using \S
\ref{facts} Fact 3, we obtain $$\tau_1\cdots\tau_{i}s_{n-1}\cdots
s_{1}(\alpha_{0})=(i+1)\delta+\alpha_{n-i-1}+\cdots+\alpha_{n-1},
1\le i\le n-2$$ (note that for $1\le i\le n-2$, $n-i-1\ge 1$).

\ni\textbf{Case 3:} \emph{To show:}
$\tau_1\cdots\tau_{i}s_{n-1}\cdots s_{j+1}(\alpha_j), 1\le i\le
n-2, j\ne 0,n-1$ is a positive real root.

\ni We have $\tau_1\cdots\tau_{i}s_{n-1}\cdots s_{j+1}(\alpha_j)=\tau^i(\alpha_{j}+\alpha_{j+1}+\cdots+\alpha_{n-1})$
(cf. \S \ref{facts}, Fact 4)
\ni $=\tau^i(\alpha_j)+\ldots+\tau^i(\alpha_{n-2})+\tau^i(\alpha_{n-1})$
which is positive because each term is positive (cf. Case 1 and Remark \ref{cyclic}).
\end{proof}
\begin{cor}\label{length}
$\ell(\kappa)=n(n-1)$.
\end{cor}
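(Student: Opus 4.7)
The plan is to read off the length directly from the reduced expression established in Lemma \ref{reduced}. By definition, $\tau = s_{n-1}s_{n-2}\cdots s_1 s_0$ is a product of exactly $n$ simple reflections, and $\kappa = \tau^{n-1} = \tau_1 \tau_2 \cdots \tau_{n-1}$ where each $\tau_i = \tau$. Therefore, the concatenated expression
\[
\kappa = (s_{n-1}\cdots s_1 s_0)(s_{n-1}\cdots s_1 s_0)\cdots(s_{n-1}\cdots s_1 s_0)
\]
is a word in the simple reflections of total length $n(n-1)$.

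Lemma \ref{reduced} asserts precisely that this word is reduced, so the Coxeter length of $\kappa$ equals the number of simple reflections in the expression, namely $n(n-1)$. No further work is needed; the corollary is a one-line consequence of the preceding lemma. There is no real obstacle — the substantive content was the verification in Cases 1--3 of Lemma \ref{reduced} that each successive partial product sends the relevant simple root to a positive real root, which by the standard length criterion ($\ell(ws_\alpha) = \ell(w)+1$ iff $w(\alpha) > 0$) guarantees reducedness.
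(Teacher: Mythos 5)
Your proof is correct and is precisely the argument the paper intends: the corollary follows immediately from Lemma \ref{reduced}, since $\tau$ is a word of $n$ simple reflections and $\kappa = \tau^{n-1}$ is therefore a reduced word of length $n(n-1)$.
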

\subsection{Minimal representative-property for $\kappa$}\label{rep}

\begin{lem}\label{minimal} $\kappa(\alpha_i)$ is a real positive root
for all $i\ne 0$.
\end{lem}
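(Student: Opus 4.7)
The plan is to split the claim into two cases according to whether the simple root lies in the $\tau$-cycle identified in Remark \ref{cyclic} or not.

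For $1 \le i \le n-2$, Remark \ref{cyclic} states that $(\alpha_{n-1}+\alpha_0, \alpha_{n-2}, \alpha_{n-3}, \ldots, \alpha_1)$ is a $\tau$-cycle of length $n-1$. Hence every $\alpha_i$ with $1 \le i \le n-2$ is fixed by $\kappa=\tau^{n-1}$, and so $\kappa(\alpha_i)=\alpha_i$, which is manifestly a positive real root.

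The only remaining case is $i=n-1$, and here I would simply reuse the explicit computation carried out in Case 1 of the proof of Lemma \ref{reduced}, which gave
$$
\tau^{i}(\alpha_{n-1}) \;=\; i\delta + \alpha_{n-i-1} + \cdots + \alpha_{n-1}, \qquad 1 \le i \le n-2.
$$
Specialising to $i=n-2$ yields $\tau^{n-2}(\alpha_{n-1}) = (n-2)\delta + \alpha_1 + \cdots + \alpha_{n-1}$. Applying $\tau$ once more, and using Fact 1 ($\tau(\delta)=\delta$) together with Fact 2 ($\tau(\alpha_1+\cdots+\alpha_{n-1}) = 2\delta + \alpha_{n-1}$), I obtain
$$
\kappa(\alpha_{n-1}) \;=\; \tau\bigl((n-2)\delta + \alpha_1+\cdots+\alpha_{n-1}\bigr) \;=\; n\delta + \alpha_{n-1},
$$
which is a positive real root because $n>0$ and $\alpha_{n-1}\in R$.

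There is no genuine obstacle here: the hard computational work has already been invested in establishing the facts of \S\ref{facts} and the inductive computation of Case 1 of Lemma \ref{reduced}. The only thing to notice is that Remark \ref{cyclic} disposes of all but one simple root at a stroke, so that a single additional application of $\tau$ to the formula already in hand completes the proof.
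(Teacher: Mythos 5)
Your proof is correct and follows essentially the same route as the paper's: Remark \ref{cyclic} handles $\alpha_1,\ldots,\alpha_{n-2}$ at once, and the remaining case $\kappa(\alpha_{n-1})=n\delta+\alpha_{n-1}$ is obtained from the same iterated application of Facts 3, 6 (the paper redoes this induction explicitly, whereas you cite the identical computation already in Case 1 of Lemma \ref{reduced}) followed by Facts 1 and 2.
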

\begin{proof} 
For $1\leq i\leq n-2$, $\kappa(\alpha_i)=\alpha_i$ is positive from Remark \ref{cyclic}.
Further, $\tau_1\cdots\tau_{n-1}(\alpha_{n-1})$

\ni $=\tau_1\cdots\tau_{n-2}(\delta+\alpha_{n-2}+\alpha_{n-1})$
(cf. \S \ref{facts}, Fact 6))

\ni $=\tau_1\cdots\tau_{n-k}((k-1)\delta+\alpha_{n-k}+\cdots+
\alpha_{n-1}), 1\le k\le n-1$ (cf. \S \ref{facts}, Fact 3))

\ni Note that for $1\le k\le n-2, n-k\ge 2$ and hence \S
\ref{facts}, Fact 3 holds. Corresponding to $k=n-1$, we get,

\ni $\tau_1\cdots\tau_{n-1}(\alpha_{n-1})$

\ni $=\tau_1((n-2)\delta+\alpha_{1}+\cdots+ \alpha_{n-1})$

\ni $=n\delta+\alpha_{n-1}$ (cf.\S \ref{facts}, Facts 1,2 )

\end{proof}

\begin{cor}\label{minimal'}
$\kappa$ is a minimal representative in ${\widehat W}/\widehat W_{G_0}$.
\end{cor}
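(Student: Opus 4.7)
The plan is to reduce the statement to the content of Lemma \ref{minimal} via the standard characterization of minimal coset representatives in a Coxeter group. Recall that $W_{G_0}$ is the subgroup of $\widehat{W}$ generated by the simple reflections $s_1,\ldots,s_{n-1}$ (since $G_0$ is the maximal parabolic of $\widehat{SL}_n$ corresponding to omitting $\alpha_0$). By the general theory of parabolic quotients in a Coxeter group, an element $w\in\widehat{W}$ is the (unique) minimal length representative of its coset $wW_{G_0}$ if and only if $\ell(ws_i)>\ell(w)$ for every generator $s_i$ of $W_{G_0}$, i.e., for $1\le i\le n-1$.

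Next, I would invoke the standard equivalence $\ell(ws_i)>\ell(w)\iff w(\alpha_i)>0$ (the same equivalence used in the proof of Corollary \ref{count}). Applying this with $w=\kappa$, minimality of $\kappa$ is equivalent to the positivity of $\kappa(\alpha_i)$ for $i=1,\ldots,n-1$.

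Finally, this positivity is exactly Lemma \ref{minimal}: for $1\le i\le n-2$ we have $\kappa(\alpha_i)=\alpha_i$ by Remark \ref{cyclic}, and for $i=n-1$ the computation in the lemma gives $\kappa(\alpha_{n-1})=n\delta+\alpha_{n-1}$, which is a positive real root. Hence $\kappa$ is the minimal representative in its $W_{G_0}$-coset, proving the corollary. There is really no obstacle here; the corollary is a one-line deduction from Lemma \ref{minimal} together with a standard Coxeter-theoretic fact.
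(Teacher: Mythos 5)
Your argument is correct and is precisely the one the paper intends: the corollary is stated without proof because it follows immediately from Lemma \ref{minimal} via the standard Coxeter-theoretic criterion $\ell(ws_i)>\ell(w)\iff w(\alpha_i)>0$, exactly as you spell out. No discrepancy with the paper's approach.
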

\ni For $w\in {\widehat{W}}$, we shall denote the Schubert variety in
$\mathcal{G}/G_0$ by $X_{G_0}(w)$.
\begin{lem}\label{stable}
$X_{G_0}(\kappa)$ is stable for multiplication on the left by
$G_0$.
\end{lem}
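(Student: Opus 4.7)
The plan is to reduce the $G_0$-stability of $X_{G_0}(\kappa)$ to stability under each minimal parabolic $P_i=\mathcal{B}\cup\mathcal{B}s_i\mathcal{B}$ for $i=1,\ldots,n-1$. Indeed, $G_0=G(A)$ admits the finite Bruhat decomposition $G_0=\bigsqcup_{w\in W_{G_0}}\mathcal{B}w\mathcal{B}$ relative to the Iwahori $\mathcal{B}$ and the finite Weyl group $W_{G_0}=\langle s_1,\ldots,s_{n-1}\rangle$; since $\mathcal{B}\subset G_0$ tautologically preserves $X_{G_0}(\kappa)$, it suffices to verify that each $P_i$ does.

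For $i=n-1$, the reduced expression for $\kappa$ exhibited in Lemma \ref{reduced} begins on the left with $s_{n-1}$, so $\ell(s_{n-1}\kappa)=\ell(\kappa)-1$. The standard Schubert cell multiplication rule $\mathcal{B}s_{n-1}\mathcal{B}\cdot\mathcal{B}\kappa\mathcal{B}=\mathcal{B}s_{n-1}\kappa\mathcal{B}\cup\mathcal{B}\kappa\mathcal{B}$ therefore confines $P_{n-1}\cdot X(\kappa)$ to $X(\kappa)$ inside $\mathcal{G}/\mathcal{B}$, and projecting along $\mathcal{G}/\mathcal{B}\to\mathcal{G}/G_0$ gives $P_{n-1}\cdot X_{G_0}(\kappa)\subset X_{G_0}(\kappa)$.

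For $i=1,\ldots,n-2$, Remark \ref{cyclic} tells us that $\alpha_i$ is fixed by $\kappa$, so $\kappa^{-1}(\alpha_i)=\alpha_i$ and hence $\kappa^{-1}s_i\kappa=s_{\kappa^{-1}(\alpha_i)}=s_i$. Thus $s_i\kappa=\kappa s_i$ with $s_i\in W_{G_0}$, which at the level of cosets in $\mathcal{G}/G_0$ says $s_i\kappa\,G_0=\kappa\,G_0$. Since $\ell(s_i\kappa)=\ell(\kappa)+1$ (because $\kappa(\alpha_i)>0$ by Lemma \ref{minimal}), the multiplication rule $\mathcal{B}s_i\mathcal{B}\cdot\mathcal{B}\kappa\mathcal{B}=\mathcal{B}s_i\kappa\mathcal{B}$ combines with the coset equality above to give $\mathcal{B}s_i\mathcal{B}\cdot\kappa\,G_0\subset\mathcal{B}s_i\kappa\,G_0=\mathcal{B}\kappa\,G_0$; passing to closures yields $P_i\cdot X_{G_0}(\kappa)\subset X_{G_0}(\kappa)$.

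I do not anticipate a real obstacle: Lemma \ref{reduced} and Remark \ref{cyclic} together already do all of the combinatorial work. The only place needing a touch of care is the second case, where one must translate the identity $s_i\kappa=\kappa s_i$ in $\widehat{W}$ into the coset-level equality $s_i\kappa\,G_0=\kappa\,G_0$ and then argue cleanly at the level of closed subvarieties (using the continuity of the $P_i$-action on $\mathcal{G}/G_0$, so that $P_i\cdot\overline{Y}\subset\overline{P_i\cdot Y}$). This is the standard bookkeeping around parabolic affine Schubert varieties.
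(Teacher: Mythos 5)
Your proposal is correct and follows essentially the same route as the paper: both proofs reduce $G_0$-stability to the criterion ``$s_i\kappa \le \kappa \pmod{\widehat{W}_{G_0}}$ for $1\le i\le n-1$'', handle $i=n-1$ by the fact that the reduced expression for $\kappa$ from Lemma \ref{reduced} begins with $s_{n-1}$, and handle $1\le i\le n-2$ via Remark \ref{cyclic} and the identity $\kappa s_i\kappa^{-1}=s_{\kappa(\alpha_i)}=s_i$. You merely spell out the standard reduction (Bruhat decomposition of $G_0$, Schubert cell multiplication, passage to closures) that the paper leaves implicit.
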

\begin{proof}
It suffices to show that
$$s_i\kappa\le\kappa(\,mod\,\widehat{W}_{G_0}), 1\le i\le n-1\leqno{(*)}$$ 
The assertion (*) is clear if $i=n-1$. 
Observe that $ws_\alpha=s_{w(\alpha)}w$.
In particular, since $\kappa$ fixes $\alpha_i$, $1\leq i\leq n-2$, it follows $s_i\kappa=\kappa s_i=\kappa(\,mod\,\widehat W_{G_0})$, for $1\leq i\leq n-2$.
\end{proof}

\begin{lemma} 
\label{sn2}
Let \Para\ be the parabolic subgroup of $\mathcal G$ corresponding to the choice of simple roots $\left\{\alpha_1,\cdots\alpha_{n-2}\right\}$.
The element $\kappa$ is a minimal representative in $\widehat W_\Para\backslash\widehat W$.
\end{lemma}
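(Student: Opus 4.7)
The plan is to invoke the standard Coxeter-theoretic characterization of minimal left coset representatives: an element $w\in\widehat W$ is minimal in $\widehat W_\Para\backslash\widehat W$ if and only if $\ell(s_iw)>\ell(w)$ for every simple reflection $s_i$ of $W_\Para$, equivalently $w^{-1}(\alpha_i)>0$ for every simple root $\alpha_i$ of $\Para$. Since $\Para$ corresponds to $\{\alpha_1,\ldots,\alpha_{n-2}\}$, the proof reduces to showing that $\kappa^{-1}(\alpha_i)>0$ for $1\le i\le n-2$.

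This verification is immediate from Remark \ref{cyclic}. The tuple $(\alpha_{n-1}+\alpha_0,\alpha_{n-2},\alpha_{n-3},\ldots,\alpha_1)$ is a single $\tau$-cycle of length $n-1$, so every entry is fixed by $\kappa=\tau^{n-1}$. In particular $\kappa(\alpha_i)=\alpha_i$ for $1\le i\le n-2$, hence $\kappa^{-1}(\alpha_i)=\alpha_i$, which is a positive (simple) root. This is exactly the mirror of the argument already used for Lemma \ref{minimal'}, applied on the other side.

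I do not anticipate any real obstacle: the heavy lifting (computing $\tau$ on the simple roots and identifying the $\tau$-cycle) was carried out in \S\ref{facts}, so this lemma is essentially a direct corollary of Remark \ref{cyclic} combined with the standard characterization of minimal coset representatives.
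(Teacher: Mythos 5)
Your proof is correct and follows the same route as the paper: reduce to the standard criterion $\kappa^{-1}(\alpha_i)>0$ for $1\le i\le n-2$, then invoke Remark \ref{cyclic} to conclude $\kappa(\alpha_i)=\alpha_i$. You simply spell out the final step a bit more explicitly than the paper does.
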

\begin{proof}
It is enough to show that $s_i\kappa>\kappa$, or equivalently, $\kappa^{-1}(\alpha_i)>0$ for $1\leq i\leq n-2$.
This follows from Remark \ref{cyclic}.
\end{proof}

\begin{rem}
For the discussion in \S \ref{kap}, \S \ref{rep}, concerning reduced expressions, minimal-representative property and $G_0$-stability, we have used the expression for elements of $\widehat{W}$,  $\widehat{W}$ being considered as a Coxeter group. One may as well carry out the discussion using the permutation presentations  for elements of $\widehat{W}$.
\end{rem}

\begin{theorem}
[A reduced expression for $\kappa_0$] 
The element $\kappa_0(=w'\tau^{n-1})$ is the  maximal representative of $\kappa$ in $\widehat W_{G_0}\backslash\widehat W$, i.e. the unique element in $\widehat W$ such that 
$$X(\kappa_0)={\overline{G_0\kappa\mathcal{B}}}(mod\,\mathcal{B})$$
In particular, $X(\kappa_0)$ is (left) $G_0$-stable.
Let $\underline w'$ be a reduced expression for the longest element $w'$ in $\widehat W_\Para$ and $\underline\tau$ the reduced expression $s_{n-1}\cdots s_1s_0$.
Then $\underline w'\underline\tau^{n-1}$ is a reduced expression for $\kappa_0$.
\end{theorem}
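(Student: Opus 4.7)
The plan is to proceed in three stages: first verify that $\underline{w'}\underline\tau^{n-1}$ is reduced, then confirm that $\kappa_0=w'\kappa$ is the longest element of the left coset $\widehat W_{G_0}\kappa$, and finally deduce the identification $X(\kappa_0)=\overline{G_0\kappa\mathcal B}(\bmod\,\mathcal B)$ and the $G_0$-stability from the second step via standard Bruhat-theoretic arguments.

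For the first stage, the key observation is that $\kappa$ commutes with $w'$: by Remark \ref{cyclic}, $\kappa$ fixes each $\alpha_i$ for $1\le i\le n-2$, so $s_i\kappa=\kappa s_i$ for those $i$, and hence $\kappa$ commutes with every element of $\widehat W_\Para=\langle s_1,\ldots,s_{n-2}\rangle$, including $w'$. Combined with Corollary \ref{minimal'}, which gives $\ell(\kappa v)=\ell(\kappa)+\ell(v)$ for every $v\in\widehat W_{G_0}$, this yields $\ell(w'\kappa)=\ell(\kappa w')=\ell(\kappa)+\ell(w')$; concatenating any reduced expression $\underline{w'}$ with $\underline\tau^{n-1}$ (which is reduced by Lemma \ref{reduced}) then gives a reduced expression for $\kappa_0$.

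For the second stage, I will use the standard criterion that $\kappa_0$ is the maximum-length representative of its coset if and only if $\kappa_0^{-1}(\alpha_i)<0$ for every $i\in\{1,\ldots,n-1\}$. Since $w'$ is an involution, $\kappa_0^{-1}(\alpha_i)=\kappa^{-1}(w'(\alpha_i))$, and I split into two cases. For $1\le i\le n-2$, $w'$ (the longest element of type $\mathbf A_{n-2}$) sends $\alpha_i\mapsto-\alpha_{n-1-i}$, and $\kappa$ fixes $\alpha_{n-1-i}$, so $\kappa_0^{-1}(\alpha_i)=-\alpha_{n-1-i}<0$. For $i=n-1$, one checks $w'(\alpha_{n-1})=\alpha_1+\cdots+\alpha_{n-1}=\theta$, so the question reduces to showing $\kappa^{-1}(\theta)<0$. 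Here the main calculation is to apply \S\ref{facts}, Fact 2 to get $\tau(\theta)=2\delta+\alpha_{n-1}$, then iterate \S\ref{facts}, Fact 3 (exactly as in Case 1 of the proof of Lemma \ref{reduced}) to obtain $\tau^{n-2}(\alpha_{n-1})=(n-2)\delta+\alpha_1+\cdots+\alpha_{n-1}$; combining these with $\tau(\delta)=\delta$ yields $\kappa(\theta)=n\delta+\theta$, so $\kappa^{-1}(\theta)=\theta-n\delta$, a negative real root.

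The third stage is almost automatic. The criterion established in the second stage is precisely the condition $s_i\kappa_0\le\kappa_0$ for every simple reflection $s_i\in\widehat W_{G_0}$, which gives $G_0$-stability of $X(\kappa_0)$ via the standard theory of Schubert varieties and minimal parabolics. Moreover, since $\kappa_0\mathcal B/\mathcal B=w'\kappa\mathcal B/\mathcal B$ lies in the $G_0$-orbit of $\kappa\mathcal B/\mathcal B$, the closure $\overline{G_0\kappa\mathcal B/\mathcal B}$ contains $X(\kappa_0)$, and the $G_0$-stability together with $\kappa\mathcal B/\mathcal B\in X(\kappa_0)$ gives the reverse inclusion. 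The main technical hurdle I anticipate is the $\delta$-coefficient computation $\kappa(\theta)=n\delta+\theta$; once this is in hand, the rest reduces to bookkeeping with the facts of \S\ref{facts}.
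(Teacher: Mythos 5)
Your proof is correct, and it diverges from the paper's argument in an interesting way for the maximality claim. The paper factors $\kappa_0 = w'\kappa = w s_0 \tau^{n-2}$, where $w$ is the longest element of $\widehat W_{G_0}$, and then derives $\ell(s_i\kappa_0) < \ell(\kappa_0)$ from subadditivity of length and maximality of $w$ --- a purely combinatorial argument that never touches the root system. You instead verify the criterion $\kappa_0^{-1}(\alpha_i) < 0$ directly for each $1 \le i \le n-1$, splitting into the easy range $1 \le i \le n-2$ (where $\kappa$ fixes $\alpha_{n-1-i}$) and the crucial case $i = n-1$, which forces a new computation $\kappa(\theta) = n\delta + \theta$. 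Both approaches are correct; the paper's is slicker (no root calculations), whereas yours is more self-contained at the level of roots and arguably more transparent about \emph{why} maximality holds. Incidentally, you could shortcut the $\kappa(\theta)$ computation: by Remark \ref{cyclic} and Lemma \ref{minimal} you already have $\kappa(\alpha_i) = \alpha_i$ for $i \le n-2$ and $\kappa(\alpha_{n-1}) = n\delta + \alpha_{n-1}$, so linearity gives $\kappa(\theta) = n\delta + \theta$ immediately. For the reduced-expression part, your use of Corollary \ref{minimal'} plus the commutation $\kappa w' = w'\kappa$ is a minor variant of the paper's appeal to Lemma \ref{sn2}; both reduce to the additivity $\ell(w'\kappa) = \ell(w') + \ell(\kappa)$, and both ultimately rest on the same Remark \ref{cyclic}.
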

\begin{proof}
Observe that $\underline w=\underline w's_{n-1}\cdots s_1$ is a reduced expression for the longest element $w$ in $\widehat W_{G_0}$, and so $w'\kappa=ws_0\tau^{n-2}$.
Lemma \ref{sn2} implies that $\underline w'\underline\tau^{n-1}$ is a reduced expression. 
In particular, $$l(\kappa_0)=l(w'\kappa)=l(w's_{n-1}\cdots s_1)+l(s_0\tau^{n-2})=l(w)+l(s_0\tau^{n-2})$$
It remains to show that $w'\kappa$ is a maximal representative in $\widehat W_{G_0}\backslash\widehat W$, i.e $s_iw'\kappa<w'\kappa$, or equivalently $l(s_iw'\kappa)<l(w'\kappa)$ for $1\leq i\leq n-1$.
First note that $$l(s_iw'\kappa)=l(s_iws_0\tau^{n-2})\leq l(s_iw)+l(s_0\tau^{n-2})$$
Now, since $w$ is the longest element in $\widehat W_{G_0}$, it follows $l(s_iw)<l(w)$ and further 
$$l(s_iw'\kappa)<l(w)+l(s_0\tau^{n-2})=l(w'\kappa)$$
\end{proof}

\section{The main lemma}\label{main} In this section, we prove one
crucial result involving $\kappa$, which we then use to prove 
the main result.
\begin{lemma}\label{crucial}
Let $Y=\sum_{1\le i<j\le n}\, a_{ij}E_{ij}$, where $E_{ij} $ is the
elementary $n\times n$ matrix with $1$ at the $(i,j)$-th place and
0's elsewhere. Let ${\underline{Y}}=Id_{n\times n}+\sum_{1\le i\le
n-1}\,t^{-i}Y^i$ (note that $Y^n=0$). Assume that $a_{i i+1}\ne 0,
1\le i\le n-1$. There exist $g\in G_0, h\in\mathcal{B}$ such that
$g\kappa = {\underline{Y}}h $
\end{lemma}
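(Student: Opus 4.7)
My plan is to construct $h\in\mathcal{B}$ and $g\in G_0$ explicitly, column by column. Define $g := \underline{Y}\,h\,\kappa^{-1}$, so that $g\kappa=\underline{Y}h$ is automatic; since $\det\underline{Y}=\det\kappa=1$ and $\det h=1$ is automatic for $h\in\mathcal{B}\subset SL_n(A)$, we get $\det g=1$ for free. Hence the task reduces to arranging $h\in\mathcal{B}$ and that every entry of $g$ lies in $A$.

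A preliminary step is to realize $\kappa$ as a matrix. Iterating the action of $\tau$ on the standard basis (using the canonical lifts of \S\ref{affine}) shows that $\tau$ cyclically shifts $e_1,\ldots,e_{n-1}$ picking up a factor $-t$ per full cycle, and sends $e_n\mapsto(-1)^{n-1}t^{-1}e_n$. Hence $\kappa=\tau^{n-1}=\operatorname{diag}(-t,\ldots,-t,(-1)^{n-1}t^{-(n-1)})$, and the $j$-th column of $g$ equals $\lambda_j^{-1}\underline{Y}h_j$ where $\lambda_j=-t$ for $j<n$ and $\lambda_n=(-1)^{n-1}t^{-(n-1)}$. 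Thus $g_j\in A^n$ amounts to $\underline{Y}h_j\in tA^n$ for $j<n$; for $j=n$ the analogous condition is automatic since every entry of $\underline{Y}$ has $t$-valuation at least $-(n-1)$.

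For $1\le j<n$ I would take the ansatz
\[
h_j \;:=\; -Y^{n-j}e_n + t^{n-j}e_n.
\]
Since $Y$ commutes with $\underline{Y}$, a short direct calculation using $\underline{Y}=\sum_{k=0}^{n-1}t^{-k}Y^k$ and $Y^n=0$ gives
\[
\underline{Y}h_j \;=\; (t^{n-j}-Y^{n-j})\underline{Y}e_n \;=\; \sum_{k=0}^{n-j-1}t^{n-j-k}Y^ke_n \;\in\; tA^n,
\]
so $g_j\in A^n$. Moreover $(Y^{n-j})_{r,n}$ vanishes for $r>j$ while $(Y^{n-j})_{j,n}=a_{j,j+1}\cdots a_{n-1,n}\neq0$ by hypothesis, so $h_j|_{t=0}=-Y^{n-j}e_n$ lies in the $K$-span of $e_1,\ldots,e_j$ with nonzero $j$-th coordinate; hence $h$ is upper triangular mod $t$ with a nonzero $(j,j)$-entry for each $j<n$. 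Finally I would set $h_n:=c_n e_n$ with $c_n\in K^\times$ chosen so that $\det h=1$; this is possible because the top-left $(n-1)\times(n-1)$ block of $h$ is upper triangular with nonzero constant diagonal.

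The main conceptual step is guessing this ansatz. The summand $-Y^{n-j}e_n$ is precisely what is needed to put $h_j|_{t=0}$ into the correct Borel subspace (and to make the $(j,j)$-diagonal entry nonzero), while the $t^{n-j}e_n$ correction is exactly what makes the non-positive $t$-powers of $\underline{Y}\,Y^{n-j}e_n$ and of $t^{n-j}\,\underline{Y}e_n$ cancel out. Everything else is routine verification.
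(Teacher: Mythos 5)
Your proof is correct, and it takes a genuinely different and cleaner route than the paper's. The paper fixes the shape of $g$ (essentially $-\mathrm{Id}_{n-1}$ in the lower-left block, $1$ in the top-right corner, with unknowns $g_{2n},\dots,g_{nn}\in K[[t]]$ in the last column), sets $h=\underline{Y}^{-1}g\kappa$, and then derives an $\binom{n}{2}\times\binom{n}{2}$ linear system in the coefficients $g_{in}^{(k)}$ from the integrality constraints on $h$; the crux of that argument is an inductive computation showing the coefficient matrix $A_n$ has determinant $(-1)^{\binom{n}{2}}\prod a_{i\,i+1}^{\,n-i}\neq 0$. You instead fix $h$ in closed form and let $g=\underline{Y}h\kappa^{-1}$ be defined by it. The telescoping identity
\[
\underline{Y}\bigl(t^{n-j}\mathrm{Id}-Y^{n-j}\bigr)e_n=\sum_{k=0}^{n-j-1}t^{\,n-j-k}\,Y^ke_n\in tA^n
\]
replaces the entire linear-algebra induction, and the structure of $Y^{n-j}e_n$ (supported in coordinates $\le j$, with $j$-th coordinate $a_{j,j+1}\cdots a_{n-1,n}\neq 0$) makes $h(0)\in B$ transparent; the nonvanishing product $\prod a_{i\,i+1}^{\,i}$ resurfaces only in the harmless normalization $c_n$ fixing $\det h=1$. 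One small point worth flagging: with the canonical lifts of \S\ref{affine}, $\kappa=\tau^{n-1}$ equals $\operatorname{diag}(-t,\dots,-t,(-1)^{n-1}t^{-(n-1)})$ rather than $\operatorname{diag}(t,\dots,t,t^{-(n-1)})$ as written in the paper's proof; the two differ by the torus element $\operatorname{diag}(-1,\dots,-1,(-1)^{n-1})\in T$, so they represent the same element of $\widehat{W}$ and the discrepancy is immaterial, but your signs are the ones that actually come out of the chosen lifts. Overall your argument is shorter, avoids the induction, and exhibits $h$ explicitly; what it doesn't give you (and what the paper's determinant formula does) is the extra quantitative information about the fibers of the parametrization, but none of that is needed for the lemma.
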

\begin{proof}
Choose $g$ to be the matrix $$g=\begin{pmatrix} 0 & 0 & 0 &\cdots
&
1\\
 -1 & 0 &0  &\cdots & g_{2n}\\
 0 &-1& 0 & \cdots & g_{3n}\\
\vdots & \vdots & \vdots & \vdots\\
0 &0& \cdots & -1 & g_{nn}
\end{pmatrix}$$
Note that the lower left corner submatrix (i.e.,the $n-1\times
n-1$ submatrix with rows $2$-nd through the $(n-1)$-th of $g$, and
the first $n-1$ columns of $g$) is $-Id_{n-1\times n-1}$, and that
determinant of $g$ equals $1$. Hence, we may take $g_{in},2\le
i\le n$ as elements in $K[[t]]$ so that $g\in G_0$. We shall now
show that there exist $g_{in}, 2\le i\le n$, and $h_{ij}, 1\le
i,j\le n $ such that $h(\in\mathcal{B})$, and $g\kappa =
{\underline{Y}}h $. We have, ${\underline{Y}}^{-1}= Id_{n\times
n}-t^{-1}Y$.  Set
$$h=(Id_{n\times n}-t^{-1}{\underline{Y}})g\kappa$$ We have
(by definition of $\kappa$ (\S\ref{elt}), and the choice of lifts
for $s_i$ (cf.\S\ref{gen}))
$$\kappa=diag(t,\cdots, t, t^{-(n-1)})$$ Note that since we want $h$ to belong to $\mathcal{B}$,
each diagonal entry in $h$ (as an element of $K[[t]]$) should have
order $0$, $h_{ij},i>j$ should have order $>0$, and $h_{ij},i<j$
should have order $\ge 0$ (since $h(0)$ should belong to $B$). Now
the diagonal entries in $h$ are
 given by $$h_{ii}=a_{ii+1}, 1\le i\le n-1, h_{nn}=
 t^{-(n-1)}g_{nn}$$
  Hence choosing $g_{nn}$ such that order$\,g_{nn}=n-1$ (note that
  since $g\in G_0$, order$\,g_{ij}\ge 0, 1\le i,j\le n$,
  so this choice for $g$ is allowed), we obtain
  that each diagonal entry in $h$ is in $K[[t]]$, with order equal to
   $0$. Also, we have
   $$\begin{gathered}h_{i+1 i}=-t, 1\le i\le n-1,\\ h_{ik}=0,k\le i-2, 3\le i\le n-1,
   h_{ik}=a_{ik+1}, 1\le i<k\le n-1
   \end{gathered}$$ Thus the entries
   $h_{ik},k\le n-1$ satisfy the order conditions mentioned above.
   Let us then consider $h_{jn}, 1\le j\le n$. We have
$$h_{jn}=t^{-(n-1)}g_{jn}-\sum_{j+1\le k\le n}\,
t^{-n}a_{jk}g_{kn}, 1\le j\le n \leqno{(*)}$$ We shall choose
$g_{in}$ (in $K[[t]]$) so that order of $g_{in}$ equals $i-1$ (
note that this agrees with the above choice of $g_{nn}$ - in the
discussion of the diagonal entries in $h$). Let us write
$$g_{in}=\sum g_{in}^{(k)}t^k$$ We shall show that with the above
 choice of $g_{in}$, the integrality condition on the $h_{in}$'s
 imposes conditions on $g_{in}^{(k)},
i-1\le k\le n, 1\le i\le n$, leading to a linear system in these
$g_{in}^{(k)}$'s
 (note that, the integrality condition on the $h_{in}$'s,
 $1\le i\le n$,  implies 
 that $h_{in}$'s should belong to $K[[t]]$, with the additional
condition that $h_{nn}$ should have order $0$ - the latter
condition having already been accommodated, since $g_{nn}$ has been
chosen to have order $n-1$). Treating $g_{in}^{(k)}$'s as the
unknowns, we show that the resulting linear system has a unique
solution, thus proving the choice of $g,h$ with the said
properties. We shall now describe this linear system. The linear
system will involve $n\choose 2$ equations in $n\choose 2$
unknowns, namely, $g_{in}^{(k)}, i-1\le k\le n, 2\le i\le n$. The
linear system is obtained as follows. The lowest power of $t$
appearing on the right hand side of (*) above is $-(n-j)$ (note
that order of $g_{kn}$ equals $k-1$). Hence equating the
coefficients of $t^{-(n-i)}, j\le i\le n-1$ on the right hand side
of (*) to $0$, we obtain
$$g_{jn}^{(i-1)}-\sum_{j+1\le k\le n}\,a_{jk}g_{kn}^{(i)}=0,  j\le
i\le n-1, 1\le j\le n-1\leqno{(**)}$$ Note that, corresponding to
$h_{nn}$, we do not have any conditions, since by our choice of
$g_{nn}$ (order of $g_{nn}$ is $n-1$), we have that $h_{nn}$ (=
$t^{-(n-1)}g_{nn}$) is integral. Also, corresponding to $g_{1n}$
(which is equal to $1$, by our choice of $g$), we have
$g_{1n}^{(i)}=0, i\ge 1$, and occurs just in one equation, namely,
the equation corresponding to the coefficient of $t^{-(n-1)}$ in
$h_{1n}$;
$$g_{1n}-a_{12}g_{2n}^{(1)}=0$$  Rewriting this equation as
$$-a_{12}g_{2n}^{(1)}=-1$$
(there is a purpose behind retaining the negative sign in
$-a_{12}g_{2n}^{(1)}$), we arrive at the linear system
$$A_nX=B$$ where $A_n$ is a square matrix of size $n\choose 2$, $X$ is
the $n\choose 2$ column matrix $(g_{jn}^k, j-1\le k\le n, 2\le
j\le n)$, and $B$ is the $n\choose 2$ column matrix with the first
entry equal to $-1$, and all other entries equal to $0$.

\ni \textbf{Claim:} $A_n$ is invertible, and $|A_n|=(-1)^{n\choose
2}\prod_{1\le i\le n-1}a_{i\,i+1}^{n-i}$.

Note that Claim implies that $(g_{jn}^{(k)}$'s, $j-1\le k\le n,
2\le j\le n)$ are uniquely determined, and therefore we may choose
$g_{jn}$ as elements in $K[[t]]$ with $(g_{jn}^{(k)}$'s, $j-1\le
k\le n$ as the solutions of the above linear system, with
$(g_{jn}^{(k)}, k>n$ being arbitrary.

We prove the Claim by induction on $n$. We shall first show that
$A_{n-1}$ can be identified in a natural way as a submatrix of
$A_n$. We want to think of the rows of $A_n$ forming $(n-1)$
blocks (referred to as \emph{row-blocks} in the sequel) of size
$n-1,n-2,\cdots,n-j,\cdots,1$, namely, the $j$-th block consists
of $n-j$ rows given by the coefficients occurring on the left hand
side of (**) for $j\ge 2$, and for $j=1$, the first block consists
of $n-1$ rows given by the coefficients occurring on the left hand
side of the following $n-1$ equations:
$$-a_{12}g_{2n}^{(1)}=-1,\ -g_{2n}^{(i)}-\sum_{3\le k\le
n}\,a_{2k}g_{kn}^{(i)}=0,2\le i\le n-1$$ Similarly, we want to
think of the columns of $A_n$ forming $(n-1)$ blocks (referred to
as \emph{column-blocks} in the sequel) of size
$n-1,n-2,\cdots,n-j,\cdots,1$, namely, the $j$-th block consisting
of $n-j$ columns indexed by $g_{jn}^{(i)}, j-1\le i\le n$. Then
indexing the $n-j$ rows in the $j$-th row-block as $j, j+1,\cdots,
n-1$, the entries in the rows of the $j$-th row-block have the
following description:

The non-zero entries in the $i$-th row in the $j$th row-block
($j\ge 2$) are

\ni $1, -a_{23},-a_{24}, \cdots, -a_{2\,i+1} $ respectively,
occurring at the columns indexed by

\ni $g_{2n}^{(i-1)},g_{3n}^{(i)},\cdots,g_{i+1\,n}^{(i)}$.

The non-zero entries in the $i$-th row in the first row-block
($j\ge 2$) are

\ni $-a_{12}, -a_{13}, \cdots, -a_{2\,i+1} $ respectively,
occurring at the columns indexed by

\ni $g_{2n}^{(i)},g_{3n}^{(i)},\cdots,g_{i+1\,n}^{(i)}$.

From this it follows that $A_{n-1}$ is obtained from $A_n$ by
deleting the first row in each row-block and the first column in
each column-block. For instance, we describe below $A_5$ and
$A_4$; for convenience of notation, we denote $b_{ij}=-a_{ij}$. We
have,
$$A_5=\left(\begin{array}{>{\columncolor{lightgray}}cccc>{\columncolor{lightgray}}ccc>{\columncolor{lightgray}}cc>{\columncolor{lightgray}}c}
\rowcolor{lightgray}
{b}_{12}&0&0&0 &0&0&0&0&0&0\\
0&b_{12}&0&0&b_{13}&0&0&0&0&0\\
0&0&b_{12}&0&0 &b_{13}&0&b_{14}&0&0\\
0&0&0&b_{12} &0&0&b_{13}&0&b_{14}&b_{15}\\
\rowcolor{lightgray}
1&0&0&0 &b_{23}&0&0&0&0&0\\
0&1&0&0&0&b_{23}&0&b_{24}&0&0\\
0&0&1&0 &0&0&b_{23}&0&b_{24}&b_{25}\\
\rowcolor{lightgray}
0&0&0& &1&0&0&b_{34}&0&0\\
0&0&0&0 &0&1&0&0&b_{34}&b_{35}\\
\rowcolor{lightgray} 0&0&0&0 &0&0&0&1&0&b_{45}
\end{array}\right)$$

$$A_4=\begin{pmatrix}

b_{12}&0&0&0&0&0\\
0&b_{12}&0 &b_{13}&0&0\\
0&0&b_{12} &0&b_{13}&b_{14}\\
1&0&0&b_{23}&0&0\\
0&1&0 &0&b_{23}&b_{24}\\
0&0&0 &1&0&b_{34}
\end{pmatrix}$$
As rows (respectively columns) of $A_5$, the positions of the
first row (respectively, the first column) in each of the four
row-blocks (respectively columns-blocks) in $A_5$ are given by
$1,5,8,10$; deleting these rows and columns in $A_5$, we get
$A_4$. These rows and columns are highlighted in $A_5$.

As above, let $b_{ij}=-a_{ij}$. Now expanding $A_n$ along the
first row, we have that $|A_n|$ equals $b_{12}|M_{1}|, M_{1}$
being the submatrix of $A_n$ obtained by deleting the first row
and first column in $A_n$ (i.e., deleting the first row
(respectively, the first column) in the first row-block
(respectively, the first column-block)). Now in $M_{1}$,  in the
first row in the second row-block the only non-zero entry is
$b_{23}$, and it is a diagonal entry in $M_{1}$. Hence expanding
$M_{1}$ through this row, we get that $|A_n|$ equals
$b_{12}b_{23}|M_{2}|, M_{2}$ being the submatrix of $A_n$ obtained
by deleting the first rows (respectively, the first columns)
in the first two row-blocks (respectively, the first two
column-blocks) in $A_n$. Now in $M_{2}$, in the first row in the
third row-block, the only non-zero entry is $b_{34}$, and it is a
diagonal entry in $M_{2}$. Hence expanding $M_{2}$ along this row,
we get that $|A_n|$ equals $b_{12}b_{23}b_{34}|M_{3}|, M_{3}$
being the submatrix of $A_n$ obtained by deleting the first
rows (respectively, the first columns) in the first three
row-blocks (respectively, the first three column-blocks) in $A_n$.
Thus proceeding, at the $(n-1)$-th step, we get that $|A_n|$
equals $b_{12}b_{23}\cdots b_{n-1\,n}|A_{n-1}|$. By induction, we
have $|A_{n-1}|=(-1)^{n-1\choose 2} \prod_{1\le i\le
n-2}a_{i\,i+1}^{n-1-i}$. Substituting back for $b_{ij}$'s, we
obtain $|A_n|=(-1)^{n\choose 2}\prod_{1\le i\le
n-1}a_{i\,i+1}^{n-i}$. It remains to verify the statement of the
claim when $n=2$ (starting point of induction). In this case, we
have $$\begin{gathered}g=\begin{pmatrix}0&1\\
-1&g_{22}\end{pmatrix}, \kappa=\begin{pmatrix}t&0\\
0&t^{-1}\end{pmatrix},\\
{\underline{Y}^{-1}}=
\begin{pmatrix}1&-t^{-1}a_{12}\\
0&1\end{pmatrix},h=\begin{pmatrix}a_{12}&t^{-1}-t^{-2}a_{12}g_{22}\\
-t&-t^{-1}g_{22}\end{pmatrix} \end{gathered}$$ Hence the linear
system consists of the single equation $$-a_{12}g_{22}^{(1)}=-1$$
Hence $A_2$ is the $1\times 1$ matrix $(-a_{12})$, and
$|A_2|=-a_{12}$, as required.
\end{proof}

\section{Lusztig's map}\label{lumap} Consider $\mathcal{N}$,
the variety of nilpotent elements in $\frak{g}$ (the Lie algebra
of $G$). In this section, we spell out (Lusztig's) isomorphism
which identifies $X_{G_0}(\kappa)$ as a compactification of $\mathcal{N}$.
\subsection{The map $\psi$:}\label{luss} Consider the map $$\psi:\mathcal{N}\rightarrow
 \mathcal{G}/G_0,
\psi(N)=(Id+t^{-1}N+t^{-2}N^2+\cdots)(mod\,G_0),N\in\mathcal{N}$$
Note that the sum on the right hand side is finite, since $N$ is
nilpotent. We now list some properties of $\psi$.

\ni\textbf{(i) $\psi$ is injective:} Let $\psi(N_1)=\psi(N_2)$.
Denoting $\lambda_i:=\psi(N_i), i=1,2$, we get that
$\lambda_2^{-1}\lambda_1$ belongs to $G_0$. On the other hand, 
$$\lambda_2^{-1}\lambda_1=(Id-t^{-1}N_2)(Id+t^{-1}N+t^{-2}N^2+\cdots)$$
Now $\lambda_2^{-1}\lambda_1$ is integral. 
It follows that both sides of the above equation equal $Id$. This
implies $\lambda_1=\lambda_2$ which in turn implies that
$N_1=N_2$. Hence we obtain the injectivity of $\psi$.

\ni\textbf{(ii) $\psi$ is $G$-equivariant:} We have

\ni $\psi(g\cdot N)=\psi(gNg^{-1})$

\ni $=(Id+t^{-1}gNg^{-1}+t^{-2}gN^2g^{-1}+\cdots)(mod\,G_0)$

\ni $=g(Id+t^{-1}N+t^{-2}N^2+\cdots)g^{-1}(mod\,G_0)$

\ni $=g(Id+t^{-1}N+t^{-2}N^2+\cdots)(mod\,G_0)$ (since $g^{-1}\in
G_0$)

\ni $=g\psi(N)$ 


\begin{prop}\label{lu}
For $N\in \mathcal{N}, \psi(N)$ belongs to $X_{G_0}(\kappa)$.
\end{prop}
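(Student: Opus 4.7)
The plan is to prove this in two stages. First, for nilpotents $N$ already in the shape $Y$ of Lemma \ref{crucial}, the lemma applies directly and places $\psi(N)$ inside $X_{G_0}(\kappa)$. Second, the general case follows by a density argument using the $G$-equivariance of $\psi$ and the $G_0$-stability (hence $G$-stability) and closedness of $X_{G_0}(\kappa)$.

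For the first stage, suppose $N=\sum_{1\le i<j\le n}a_{ij}E_{ij}$ is strictly upper triangular with all superdiagonal entries $a_{i,i+1}$ nonzero. Then $N^n=0$, so the series defining $\psi(N)$ truncates to exactly the finite sum $\psi(N)=Id+t^{-1}N+\cdots+t^{-(n-1)}N^{n-1}$, which is precisely the element $\underline{Y}$ of Lemma \ref{crucial} with $Y=N$. The lemma produces $g\in G_0$ and $h\in\mathcal{B}$ with $g\kappa=\underline{N}\,h$, i.e.\ $\underline{N}=g\kappa h^{-1}$. Since $h^{-1}\in\mathcal{B}\subset G_0$, reducing modulo $G_0$ yields $\psi(N)=g\kappa\,(\mathrm{mod}\,G_0)\in G_0\kappa G_0/G_0\subset X_{G_0}(\kappa)$.

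For the second stage, let $N\in\mathcal{N}$ be arbitrary. Any nilpotent matrix can be $G$-conjugated into strictly upper triangular form, and within the strictly upper triangular nilpotents those with all superdiagonal entries nonzero form a Zariski dense open subset. Hence we may write $N=\lim_m g_0 Y_m g_0^{-1}$ for some fixed $g_0\in G$ and a sequence $Y_m$ of the type treated in the first stage. By the $G$-equivariance of $\psi$ established in \S\ref{luss}(ii), $\psi(g_0 Y_m g_0^{-1})=g_0\cdot\psi(Y_m)$, and this stays in $X_{G_0}(\kappa)$ because $G\subset G_0$ and $X_{G_0}(\kappa)$ is $G_0$-stable by Lemma \ref{stable}. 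Finally, $\psi$ is a morphism of varieties (given by polynomial expressions in the entries of $N$, with image landing in a fixed finite-dimensional stratum of $\mathcal{G}/G_0$ because of the uniform bound $N^n=0$), and $X_{G_0}(\kappa)$ is closed, so $\psi(N)=\lim_m g_0\cdot\psi(Y_m)\in X_{G_0}(\kappa)$. The substantive content has already been packaged into Lemma \ref{crucial}; the only care needed here is to verify that the nilpotents covered by that lemma are dense in $\mathcal{N}$, which is immediate from the density of the principal nilpotent $G$-orbit.
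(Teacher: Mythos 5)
Your argument is correct and matches the paper's proof in essence: both reduce to the case of a strictly upper-triangular nilpotent with nonzero superdiagonal entries, invoke Lemma \ref{crucial} to place $\psi(N)$ in $X_{G_0}(\kappa)$, and then handle a general nilpotent by $G$-conjugation combined with $G$-equivariance of $\psi$, $G_0$-stability of $X_{G_0}(\kappa)$, and density/closedness. You are somewhat more explicit than the paper about why the Zariski-closure step is legitimate (that $\psi$ is a morphism into a fixed finite-dimensional piece of $\mathcal{G}/G_0$ because $N^n=0$ uniformly), which the paper leaves implicit when it says ``we may work in the open subset $x_{i\,i+1}\neq 0$.''
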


\begin{proof}
We divide the proof into two cases.

\ni \textbf{Case 1:} Let $N$ be upper triangular, say,
$$N=\left(n_{ij}\right)_{1\le i,j\le n}$$ where $n_{ij}=0$,
for $i\ge j$;  note that $N\in {\underline{b}_u},
{\underline{b}_u}$ being the Lie algebra of $B_u$, the unipotent
radical of $B$. We may work in the open subset $x_{ii+1}\ne 0,1\le i\le
n-1$ in ${\underline{b}_u}$, $\sum_{1\le i<j\le n}\, x_{ij}E_{ij}$
being a generic element in ${\underline{b}_u}$. Hence we may
suppose that $n_{i\,i+1}\ne 0, 1\le i\le n-1$. In this case, in
view of Lemma \ref{crucial}, we have that there exist $g\in G_0, h\in
\mathcal{B}$ such that $g\kappa =\psi(N)h$. This implies, in view
of the $G_0$-stability for $X_{G_0}(\kappa)$ (cf. Lemma
\ref{stable}), $\psi(N)$ belongs to $X_{G_0}(\kappa)$.

\ni \textbf{Case 2:} Let $M$ be an arbitrary nilpotent matrix.
Then there exists an upper triangular matrix $N$ in the $G$-orbit
through $N$. Hence there exists a $g\in G$ such that
$M=gNg^{-1}(=g\cdot N)$ with $N$ upper triangular. Now by
$G$-equivariance of $\psi$ (cf. (ii) above), we have
$\psi(M)=g\cdot \psi(N)$. By case 1, $\psi(N)\in X_{G_0}(\kappa)$;
this together with the $G_0$-stability for $X_{G_0}(\kappa)$
implies that $\psi(M)$ belongs to $X_{G_0}(\kappa)$.
\end{proof}

\begin{theorem}\label{l-isom}
$X_{G_0}(\kappa)$ is a compactification of $\mathcal N$.
\end{theorem}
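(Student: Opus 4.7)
The aim is to promote the injective $G$-equivariant morphism $\psi:\mathcal{N}\to X_{G_0}(\kappa)$ from Proposition \ref{lu} to an open immersion onto a dense open subvariety of $X_{G_0}(\kappa)$, thereby realising the projective Schubert variety $X_{G_0}(\kappa)$ as a compactification of $\mathcal{N}$.

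The first step is a dimension comparison. The nilpotent cone of $\mathfrak{sl}_n$ is irreducible of dimension $\dim G-\operatorname{rk} G=n^2-n=n(n-1)$, and by Corollary \ref{length} we have $\dim X_{G_0}(\kappa)=\ell(\kappa)=n(n-1)$. Because $\psi$ is an injective morphism between irreducible varieties of the same dimension, its image is constructible of full dimension, and is therefore dense in $X_{G_0}(\kappa)$.

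The second step is to construct an explicit inverse of $\psi$ on a dense open subvariety of $\mathcal{N}$. Let $U\subset\mathcal{N}$ be the open set of upper-triangular nilpotents $N=\sum_{i<j}a_{ij}E_{ij}$ with every superdiagonal entry $a_{i,i+1}$ nonzero. For $N\in U$ Lemma \ref{crucial} produces $g\in G_0$ and $h\in\mathcal{B}\subset G_0$ with $g\kappa=\underline Y h$, so $\psi(N)=\underline Y\bmod G_0=g\kappa\bmod G_0$ lies in the $G_0$-orbit $G_0\cdot(\kappa\bmod G_0)$. By the $G_0$-stability of $X_{G_0}(\kappa)$ (Lemma \ref{stable}) together with density of the Schubert cell $\mathcal{B}\kappa\bmod G_0$, this $G_0$-orbit is the unique open dense $G_0$-orbit in $X_{G_0}(\kappa)$, and it is locally closed, smooth, and of dimension $n(n-1)$. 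From the formula $\psi(N)=(I+t^{-1}N+t^{-2}N^2+\cdots)\bmod G_0$ the nilpotent matrix $N$ is recovered as the $t^{-1}$-coefficient of the unique integral representative of the coset (exactly the uniqueness argument used in property (i) of \S\ref{luss}), and this $t^{-1}$-coefficient depends regularly on the standard coordinates on the open orbit. Hence $\psi|_U$ is an isomorphism onto an open subvariety of $X_{G_0}(\kappa)$, so $\psi$ is birational.

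The third step is to pass from birationality to an open immersion. Injectivity of $\psi$ makes it quasi-finite, and affine Schubert varieties in type $\widehat A$ are normal (by the normality theorem of Mathieu and Kumar for Schubert varieties in Kac-Moody flag varieties). Zariski's Main Theorem then yields that $\psi$ is an open immersion onto a dense open subvariety of the projective variety $X_{G_0}(\kappa)$, which is precisely the assertion that $X_{G_0}(\kappa)$ compactifies $\mathcal{N}$.

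The main obstacle is the regularity claim used in the second step: although $N$ is conceptually the $t^{-1}$-coefficient of $\underline Y$, one must verify that on a suitable affine chart of $X_{G_0}(\kappa)$ (for instance, a chart modelled on the big Schubert cell $\mathcal{B}\kappa\bmod G_0$ or a $G_0$-translate of it) this coefficient is a regular function of the chart coordinates, so that the candidate inverse is genuinely a morphism rather than just a set-theoretic assignment. Once this regularity is in place, normality of $X_{G_0}(\kappa)$ and Zariski's Main Theorem finish the argument cleanly.
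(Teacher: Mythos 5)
Your step 1 is exactly the paper's entire proof: Proposition \ref{lu} plus injectivity of $\psi$ plus the equality $\dim\mathcal{N}=n(n-1)=\ell(\kappa)=\dim X_{G_0}(\kappa)$ (from Corollaries \ref{length} and \ref{minimal'}) forces $\overline{\psi(\mathcal{N})}=X_{G_0}(\kappa)$, and the paper stops there, reading ``compactification'' as ``dense image in a projective variety.'' Your steps 2 and 3 go further and try to show $\psi$ is an open immersion, which is the stronger (and arguably more honest) reading of the statement.

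The extra work is a reasonable ambition, but your step 2 is where you flag a gap, and that step is in fact superfluous. You do not need to exhibit a regular inverse on a chart to get birationality: over $\mathbb{C}$ (characteristic zero), an injective dominant morphism between irreducible varieties of the same dimension is automatically birational, because the induced extension of function fields is finite and separable of degree equal to the cardinality of a general fiber, which is $1$ by injectivity. So step 1 already gives birationality for free. Then step 3 closes the argument: $\psi$ is quasi-finite (injective) and birational onto the normal projective variety $X_{G_0}(\kappa)$ (normality of Kac--Moody Schubert varieties being Kumar--Mathieu), so Zariski's Main Theorem makes $\psi$ an open immersion onto a dense open subset. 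The regularity obstacle you worry about simply does not arise. One small caveat worth stating if you keep this level of rigor: one should note that $\psi$ is genuinely a morphism of varieties $\mathcal{N}\to X_{G_0}(\kappa)$ (the paper takes this as evident, and it is, since the map $N\mapsto\underline{Y}$ is polynomial in the entries of $N$ and lands in a fixed finite-dimensional Schubert variety by Proposition \ref{lu}). With that observation, your streamlined argument -- step 1 followed directly by step 3 -- is a clean strengthening of the paper's proof.
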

\begin{proof}
Let $\overline{\mathcal N}$ be the closure of $\mathcal N$ in $\mathcal G/G_0$.
Combining the above Proposition with \S \ref{luss}, (i) and
the facts that $\dim \mathcal{N}=n(n-1)=\dim X_{G_0}(\kappa)$ (cf. Corollaries \ref{length},\ref{minimal'}), we obtain $\overline{\mathcal{N}}=X_{G_0}(\kappa)$.
\end{proof}

\section{Cotangent bundle}
In this section, we first recall the Springer resolution.
We then construct a family $\psi_p$, parametrized by polynomials $p$ in one variable with coefficients in $\mathbb C((t))$ and constant term $1$, of maps $\psi_p:T^*G/B\rightarrow\mathcal G/\mathcal B$.
We show that for a particular choice $\phi$ in the family, we get an embedding of $T^*G/B$ inside $\mathcal G/\mathcal B$.
Using the natural projection $\mathcal G/\mathcal B\rightarrow \mathcal G/G_0$ and the results of $\S 5$, we recover the Springer resolution.
We then show that $\phi$ identifies an $SL_n$-stable closed subvariety of $X(\kappa_0)$ as a compactification of $T^*G/B$.

\label{bundle} The cotangent bundle $T^*G/B$ is a
vector bundle over $G/B$, with the fiber at any point $x\in G/B$ being
the cotangent space to $G/B$ at $x$; the dimension of $T^*G/B$
equals $2\dim G/B$. Also, $T^*G/B$ is the fiber bundle over
$G/B$ associated to the principal $B$-bundle $G\rightarrow G/B$,
for the Adjoint action of $B$ on ${\underline{b}_u}$ (the Lie
algebra of the unipotent radical $B_u$ of $B$). Thus
$$T^*G/B=G\times^B {\underline{b}_u}=G\times {\underline{b}_u}/
\sim$$ where the equivalence relation $\sim$ is given by $(g,
Y)\sim(gb, b^{-1}Yb),g\in G,Y\in{\underline{b}_u}, b\in B$ .
\subsection{Springer resolution} Let
$\mathcal{N}$ be the variety of nilpotent elements in ${\frak g}$,
the Lie algebra of $G$. Consider the map
$$\theta:G\times^B {\underline{b}_u}\rightarrow
G/B\times\mathcal{N}, \theta((g, Y))=(gB,gYg^{-1}), g\in G, Y\in
{\underline{b}_u}$$ We observe the following on the map $\theta$:

\ni \textbf{(i) $\theta$ is well defined:} Let $b\in B$. Consider
$(gb, b^{-1}Yb) (\sim (g,b))$. We have, $$\theta((gb,
b^{-1}Yb))=(gB,gb(b^{-1}Yb)b^{-1}g^{-1})=(gB,gYg^{-1})=\theta((g,
Y))$$

\ni \textbf{(ii) $\theta$ is injective:} Suppose $\theta((g_1,
Y_1))=\theta((g_2, Y_2))$. Then
$(g_1B,g_1Y_1g_1^{-1})=(g_2B,g_2Y_2g_2^{-1})$. This implies
$$g_1B=g_2B,g_1Y_1g_1^{-1}=g_2Y_2g_2^{-1}$$ Hence we obtain $$
\begin{gathered} g_1^{-1}g_2=:b\in B, Y_2=
g_2^{-1}g_1Y_1g_1^{-1}g_2\\
\therefore g_2=g_1b, Y_2=b^{-1}Y_1b\\
\therefore (g_1, Y_1)=(g_1b, b^{-1}Y_1b)=(g_2, Y_2)
\end{gathered}$$ Thus we get  an embedding $$\theta:T^*G/B\hookrightarrow G/B\times\mathcal{N}$$
The second projection $$T^*G/B\rightarrow \mathcal{N},(g,Y)\mapsto
gYg^{-1}
$$ is proper and birational and is the celebrated
\emph{Springer resolution}

\subsection{The Maps $\psi_p$}\label{maps} 
Let $p(Y)$ be a polynomial in $Y$ with coefficients in $F$, and constant term $1$.
We write \begin{align*}
    p(Y)=1+\sum\limits_{i\geq1}p_i(t)Y^i
\end{align*}
It is clear that $p(Y)\in\mathcal G$.
Define the map $\psi_p:G\times^B{\underline{b}_u}\rightarrow \mathcal{G}/\mathcal{B}$ by 
$$\psi_p(g,Y)=gp(Y)(mod\,\mathcal{B}),
g\in G, Y\in{\underline{b}_u}$$ 
The following calculation shows that $\psi_p$ is well defined: Let $g\in G,\ b\in B,\ Y\in{\underline{b}_u}$. Then \begin{align*}
\psi_p((gb, b^{-1}Yb)) &=gb\left(Id+p_1(t)b^{-1}Yb+p_2(t)b^{-1}Y^2b+\cdots\right)(mod\,\mathcal{B})\\
&=g\left(Id+p_1(t)Yb+p_2(t)Y^2b+\cdots\right)(mod\,\mathcal{B})\\
&=g\left(Id+p_1(t)Y+p_2(t)Y^2+\cdots\right)(mod\,\mathcal{B})\\
&=\psi_p(g,Y) 
\end{align*}
Also, it is clear that $\psi_p$ is $G$-equivariant. 

\subsection{Embedding of $T^*G/B$ into $\mathcal{G}/\mathcal{B}$:} 
\label{phi}
We consider one particular member $\phi$ of the family $\psi_p$: namely $\phi=\psi_p$ where $p(Y)$ is the polynomial $(1-t^{-1}Y)^{-1}$;
observe that for nilpotent $Y$, the function \begin{align*}
    p(Y)&=(1-t^{-1}Y)^{-1}\\
        &=1+t^{-1}Y+t^{-2}Y^2+\cdots
\end{align*}
is a polynomial, since the sum on the right hand side is finite.
In particular, $\phi:G\times^B
{\underline{b}_u}\rightarrow \mathcal{G}/\mathcal{B}$ is given by 
$$\phi(g,Y)=g(Id+t^{-1}Y+t^{-2}Y^2+\cdots)(mod\,\mathcal{B})$$
In the sequel, we shall denote $${\underline{Y}}:=Id+t^{-1}Y+t^{-2}Y^2+\cdots$$ We
now list some facts on the map $\phi$:

\ni\textbf{(i) $\phi$ is well-defined} 

\ni\textbf{(ii) $\phi$ is injective:} Let
$\phi((g_1,Y_1))=\phi((g_2,Y_2))$. This implies that
$g_1{\underline{Y_1}}\equiv
g_2{\underline{Y_2}}(mod\,\mathcal{B})$, where recall that for
$Y\in{\underline{b}_u},{\underline{Y}}=Id+t^{-1}Y+t^{-2}Y^2+\cdots$.
Hence, $g_1{\underline{Y_1}}= g_2{\underline{Y_2}}x$, for some
$x\in\mathcal{B}$. Denoting $h=:g_2^{-1}g_1$, we have,
$h{\underline{Y_1}}={\underline{Y_2}}x$, and
therefore,$$x={\underline{Y_2}}^{-1}h{\underline{Y_1}}=
{\underline{Y_2}}^{-1}(h{\underline{Y_1}}
h^{-1})h={\underline{Y_2}}^{-1}{\underline{Y'_1}}h$$ where
${\underline{Y'_1}}=h{\underline{Y_1}} h^{-1}$. Hence
$$xh^{-1}={\underline{Y_2}}^{-1}{\underline{Y'_1}}=(Id-t^{-1}Y_2)
(Id+t^{-1}hY_1h^{-1}+t^{-2}hY^2_1h^{-1}+\cdots)$$ 
Now, since $x\in\mathcal{B},h(=g_2^{-1}g_1)\in G$, the left hand
side is integral, i.e. it does not involve negative powers of $t$.
Hence both sides equal $Id$. This implies
$${\underline{Y_2}}={\underline{Y'_1}},x=h$$ The fact that $x=h$
together with the facts that $x\in\mathcal{B},h\in G$ implies that
$$h\in \mathcal{B}\cap G(=B)\leqno{(*)}$$ Further, the fact that
${\underline{Y_2}}={\underline{Y'_1}}$ implies that
${\underline{Y_1}}=h^{-1}{\underline{Y_2}}h$. Hence
$$Id+t^{-1}Y_1+t^{-2}Y_1^2+\cdots=Id+t^{-1}h^{-1}Y_2h+t^{-2}h^{-1}Y^2_2h
+\cdots$$ From this it follows that $$Y_1=h^{-1}Y_2h\leqno{(**)}$$
Now (*), (**) together with the fact that $h=g_2^{-1}g_1$ imply
that $$(g_1,Y_1)=(g_2h,h^{-1}Y_2h)\sim (g_2,Y_2)$$ From this
injectivity of $\phi$ follows.

\ni\textbf{(iii) $G$-equivariance:} It is clear that $\phi$ is $G$-equivariant.

\ni\textbf{(iv) Springer resolution:} Consider the projection
$\pi:\mathcal{G}/\mathcal{B}\rightarrow \mathcal{G}/G_0$. Let
$x\in T^*G/B$, say, $x=(g,Y), g\in G, Y\in{\underline{b}_u}$. We
have

\ni $\pi((g,Y))=\phi((g,Y))(mod\,G_0)$

\ni $=g(Id+t^{-1}Y+t^{-2}Y^2+\cdots)
(mod\,G_0)$

\ni $=g(Id+t^{-1}Y+t^{-2}Y^2+\cdots)g^{-1}(mod\,G_0)$

\ni $=(Id+t^{-1}N+t^{-2}N^2+\cdots) (mod\,G_0)$

\ni where $N=gYg^{-1}$ is nilpotent. Hence, in view of Lusztig's
isomorphism (cf. Proposition \ref{l-isom}), we recover the
Springer resolution as
$$\pi\,|_{T^*G/B}:T^*G/B\rightarrow \mathcal{N}\hookrightarrow
\mathcal{G}/G_0$$ 

\begin{thm}[Compactification of $T^*G/B$]\label{ctgt} 
Let $G=SL_n(\mathbb C)$ and $\phi:G/B\rightarrow\mathcal G/\mathcal B$ be as in section \ref{phi}.
Then $\phi$ identifies
${\overline{ T^*G/B}}$ (the closure being in
$\mathcal{G}/\mathcal{B}$) with a $G$-stable closed subvariety of
the affine Schubert variety $X(\kappa_0)$.
\end{thm}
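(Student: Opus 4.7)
The plan is to verify three claims: (a) $\phi(T^*G/B)\subset X(\kappa_0)$; (b) the closure $\overline{\phi(T^*G/B)}$ taken in $\mathcal G/\mathcal B$ is a closed subvariety of $X(\kappa_0)$; and (c) this closure is $G$-stable. Observe that (b) and (c) are formal consequences of (a): since $X(\kappa_0)$ is closed in $\mathcal G/\mathcal B$, the closure of any subset of $X(\kappa_0)$ stays inside $X(\kappa_0)$; and since $\phi$ is $G$-equivariant (item (iii) of \S \ref{phi}), its image is $G$-stable, so by the continuity of the $G$-action on $\mathcal G/\mathcal B$ the closure is $G$-stable as well. Thus the entire content of the theorem reduces to the containment (a).

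To prove (a), I would first use $G$-equivariance of $\phi$ to reduce to showing that $\phi((e,Y))=\underline Y\cdot\mathcal B$ lies in $X(\kappa_0)$ for every $Y\in\underline{b}_u$: from this special case, the general case $\phi((g,Y))=g\underline Y\cdot\mathcal B$ follows from the $G$-stability of $X(\kappa_0)$, which in turn follows from the $G_0$-stability established in the theorem giving the reduced expression for $\kappa_0$, together with the inclusion $G\subset G_0$. On the open dense locus $U\subset\underline{b}_u$ of matrices $Y=\sum_{1\le i<j\le n}a_{ij}E_{ij}$ whose superdiagonal entries $a_{i,i+1}$ are all nonzero, Lemma \ref{crucial} produces $g\in G_0$ and $h\in\mathcal B$ with $g\kappa=\underline Y h$; passing to the quotient gives $\underline Y\cdot\mathcal B=g\kappa\cdot\mathcal B\in G_0\kappa\mathcal B/\mathcal B$, which is contained in $X(\kappa_0)=\overline{G_0\kappa\mathcal B}/\mathcal B$. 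Continuity of the morphism $Y\mapsto\underline Y\cdot\mathcal B$ from $\underline{b}_u$ to $\mathcal G/\mathcal B$, combined with the closedness of $X(\kappa_0)$, then propagates the containment from the dense set $U$ to all of $\underline{b}_u$, completing step (a).

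The genuinely substantive work, namely the explicit construction of the integral $g$ and the Borel element $h$ witnessing $g\kappa=\underline Y h$, has already been carried out in Lemma \ref{crucial} via the inductive computation of $\det A_n$. Consequently I do not expect a new combinatorial or computational obstacle here; the proof of Theorem \ref{ctgt} is essentially organizational, chaining together Lemma \ref{crucial}, the $G_0$-stability of $X(\kappa_0)$ established in \S \ref{elt}, the $G$-equivariance of $\phi$ from \S \ref{phi}, and elementary topological facts about closures of $G$-stable subsets. The resulting subvariety $\overline{\phi(T^*G/B)}\subset X(\kappa_0)$ is precisely the $SL_n$-stable compactification of $T^*G/B$ advertised in the abstract.
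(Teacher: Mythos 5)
Your proposal is correct and follows essentially the same route as the paper: reduce via the $G$-equivariance of $\phi$ and the $G_0$-stability of $X(\kappa_0)$, invoke Lemma \ref{crucial} on the dense open set of $Y$ with nonzero superdiagonal, and then pass to the closure. You simply make explicit the density-plus-closedness step that the paper leaves implicit in the phrase ``we may work in the open subset.''
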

\begin{proof} Let $(g_0,Y), g_0\in G, Y\in{\underline{b}_u}$. Then
$\phi(g_0,Y)=g_0(Id+t^{-1}Y+t^{-2}Y^2+\cdots)(mod\,\mathcal{B})=
g_0{\underline{Y}}(mod\,\mathcal{B})$, where
${\underline{Y}}=Id+t^{-1}Y+t^{-2}Y^2+\cdots$. Writing
$Y=\sum_{1\le i<j\le n}\, a_{ij}E_{ij}$ with $E_{ij} $ as in Lemma
\ref{crucial}, we may work in the open subset $x_{ii+1}\ne 0,1\le
i\le n-1$ in ${\underline{b}_u}$, $\sum_{1\le i<j\le n}\,
x_{ij}E_{ij}$ being a generic element in ${\underline{b}_u}$. Then
Lemma \ref{crucial} implies that there exist $g\in G_0,
h\in\mathcal{B}$ such that $g\kappa = {\underline{Y}}h $. Hence
${\underline{Y}}$ belongs to
$X(\kappa_0)(={\overline{G_0\kappa\mathcal{B}}}(mod\,\mathcal{B}))$;
hence $g_0{\underline{Y}}$ is also in $X(\kappa_0)$ (since $g_0$
is clearly in $G_0$).
\end{proof}


\section{Consequences of $\psi_p$ for $T^*G/B$}
\label{counter}
In this section, we show that for any polynomial $p$, the map $\psi_p$ as defined in \S\ref{maps} cannot realize an affine Schubert variety (in $\widehat{SL}_3/\mathcal B$) as a compactification of the cotangent bundle $T^*SL_3(K)/B$.
\begin{prop}
Let $G$ be the group $SL_3(K)$ and the $B$ the Borel subgroup of upper triangular matrices in $G$. 
Let $p$ be a polynomial as in \S\ref{maps}.
Suppose that the associated map $\psi_p:T^*G/B\rightarrow \mathcal G/\mathcal B$ is injective.
Then there exist $g\in G$, $w\in\widehat W$ and $Y\in\underline b _u$ such that $\psi_p(g,Y)\in\mathcal{B} w\mathcal{B}$ 
and $l(w)>6$. 
\end{prop}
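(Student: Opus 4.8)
The goal is to exhibit, for any polynomial $p$ as in \S\ref{maps} giving an injective map $\psi_p$, a point $(g,Y)\in T^*G/B$ with $\psi_p(g,Y)$ lying in a Bruhat cell $\mathcal{B}w\mathcal{B}$ with $\ell(w)>6$. The first step is to recall the dimension count: $\dim T^*G/B = 2\dim G/B = 2\cdot 3 = 6$ for $G=SL_3$, and that $\dim X(w)=\ell(w)$. So a compactification of $T^*G/B$ by an affine Schubert variety $X(w)$ would force $\ell(w)=6$, and the whole image $\psi_p(T^*G/B)$ would have to be contained in $X(w)$, i.e. in the union of cells indexed by $\tau\le w$. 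Thus it suffices to show that the image of $\psi_p$ is \emph{not} contained in any $X(w)$ with $\ell(w)=6$; equivalently, that $\psi_p(T^*G/B)$ meets some cell $\mathcal{B}w\mathcal{B}$ with $\ell(w)>6$ — because if the image met only cells of length $\le 6$, then by injectivity (hence $\dim\overline{\psi_p(T^*G/B)}=6$) and irreducibility of $T^*G/B$, the closure would be an irreducible $6$-dimensional $\mathcal{B}$-stable (indeed $\mathcal B$ acts on the left by $G$-equivariance extended... here one uses that the closure is stable under the left $\mathcal B$-action) closed subvariety, forcing it to equal a single $X(w)$ with $\ell(w)=6$.

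The heart of the argument is therefore an explicit computation in $\widehat{SL}_3$. Take $g$ to be a suitable element of $SL_3$ (one expects $g$ to be a Weyl-group representative, e.g. a permutation matrix, or even $g=\mathrm{Id}$) and take $Y=\sum_{1\le i<j\le 3}a_{ij}E_{ij}$ a generic nilpotent upper-triangular matrix with $a_{12},a_{23}\ne 0$. One then forms $g\,p(Y)\in\mathcal{G}=SL_3(F)$ and determines which Bruhat cell it lands in. The key point is that $p(Y)$ is a $3\times 3$ matrix over $F$ whose entries involve the coefficients $p_i(t)$ of $p$; the constant term $1$ of $p$ guarantees $p(0)=\mathrm{Id}$ but imposes no control on the orders of the $p_i(t)$. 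I would compute the Bruhat cell via the standard recipe: left $\mathcal B$ = inverse image of upper-triangular $B$, and the cell of $m\in SL_3(F)$ is read off from the lattice-flag it determines (equivalently, from the valuations of minors of $m$, i.e. a "matrix ball" / affine-permutation computation). The claim to extract is: for \emph{every} choice of the $p_i(t)$, and for an appropriate choice of $g$, the resulting affine permutation $w$ has length $>6$. Here I would use the length formula and root-action formulae recalled in \S\ref{affine} to bound $\ell(w)$ from below; the simplest route is to show $w$ moves many positive real roots to negative ones, e.g. by producing an explicit large translation part $\lambda_q$ appearing in $w=v\lambda_q$ and invoking $\ell(\lambda_q)=\sum_{\alpha\in R^+}|\alpha(q)|$.

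The main obstacle is the uniformity over all admissible $p$: a single bad $p$ is not enough — the proposition asserts the failure for \emph{every} $p$ (for which $\psi_p$ is injective). So the computation must be organized so that the conclusion $\ell(w)>6$ is robust against changing the $p_i(t)$ (in particular against making them highly negative in order, which is exactly what might try to "pull $g\,p(Y)$ back into a small Schubert variety"). I expect the right mechanism is that injectivity of $\psi_p$ already forces $p$ to have "enough" negative powers of $t$ — concretely, $\psi_p$ injective should force $p_1(t)$ (and the higher $p_i$) to involve $t^{-1}$ with nonzero coefficient, since otherwise $p(Y)$ stays integral on the relevant open set and $\psi_p$ collapses the fibre direction — and that very same negativity inflates the translation part of $w$ beyond length $6$. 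So the plan is: (1) reduce, via the dimension/irreducibility argument above, to finding one point of the image in a cell of length $>6$; (2) use injectivity of $\psi_p$ to pin down the leading negative-order behaviour of $p$; (3) pick $g$ (a permutation) and generic $Y$, compute the affine permutation $w$ of $g\,p(Y)$ by minors/valuations, and (4) read off $\ell(w)>6$ from the length formula, checking that the bound survives all allowed $p_i(t)$. Step (3)–(4) is the computational crux and is where I would do the $3\times 3$ bookkeeping carefully.
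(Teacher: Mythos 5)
Your outline follows the paper's strategy in broad strokes (use injectivity to force a pole in $p_1$; pick a concrete $(g,Y)$; compute the Bruhat cell of $g\,p(Y)$ and bound its length via the translation-length formula), but the proposal stops precisely where the content of the proof lies. You write that steps (3)--(4) are ``the computational crux and is where I would do the $3\times 3$ bookkeeping carefully'' --- that bookkeeping \emph{is} the proof. After writing $p(Y)=1-t^{-a}qY-t^{-b}rY^2$ with $q,r\in A$, $q(0)\neq0$, $a\geq1$, one has to carry out a genuine case analysis (in the paper: $r=0$ or $b\le a$; $a<b<2a$; $b=2a$ with $q^2+r=0$; and $b\geq2a$ with $q^2t^{b-2a}+r\neq0$), produce explicit $C,D\in\mathcal{B}$ reducing $g\,p(Y)$ to a monomial matrix in each case, read off the element $\lambda_q$ (or $\lambda_q s_i$), and verify the length bound $\ell(w)>6$ holds in each branch. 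None of that is present, and it is not routine: the bounds produced ($6a$, $4a+2b$, $4b$) only clear $6$ because of the constraints $a\geq 1$, $b\geq 1$ established earlier, and the choice of $C,D$ differs nontrivially between cases.

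Two smaller issues. First, the opening ``reduction'' via dimension and irreducibility is superfluous: the proposition is already the assertion that some $\psi_p(g,Y)$ lands in a cell of length $>6$; there is nothing to reduce to. Second, your suggestion that $g=\mathrm{Id}$ ``might work'' is misleading and likely false: with $g=\mathrm{Id}$ and $Y$ upper triangular, $p(Y)$ is unipotent upper triangular, and such matrices can land in very small cells despite having poles (e.g.\ in $SL_2$, $\begin{pmatrix}1 & t^{-1}\\ 0 & 1\end{pmatrix}$ lies in $\mathcal{B}s_0\mathcal{B}$, length $1$). The paper's choice is the anti-diagonal representative of the longest Weyl element; this is what pushes $g\,p(Y)$ into a cell indexed by a large translation. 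Without fixing a good $g$ and actually doing the reduction to normal form, the claim $\ell(w)>6$ is not established.
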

\begin{proof}
From \S\ref{maps}, we may assume $p(Y)=1+\sum\limits_{i\geq1}p_i(t)Y^i$. 
We first claim that $p_1(t)\notin A$.
Assume the contrary.
For $$Z=\begin{pmatrix}0&1&0\\0&0&0\\0&0&0\end{pmatrix}$$
we see that $Z^2=0$, and so $p(Z)=1+p_1(t)Z\in\mathcal B$. 
In particular, $\psi_p(Z)=\psi_p(0)$, contradicting the injectivity of $\psi_p$.
\\
\\
We now write 
   $ p(Y)=1-t^{-a}qY-t^{-b}rY^2$
where \begin{itemize}
    \item $q,r\in A$. 
    \item $q(0)\neq 0$. 
    \item $a\geq 1$.
    \item Either $r=0$ or $r(0)\neq 0$.
\end{itemize}
We now fix $Y=         \begin{pmatrix}
                    0 & 1 & 0   \\
                    0 & 0 & 1   \\
                    0 & 0 & 0 
                \end{pmatrix}$ and 
$g=             \begin{pmatrix}
                    0 & 0 &-1   \\
                    0 &-1 & 0   \\
                   -1 & 0 & 0 
                \end{pmatrix}$, so that 
$$gp(Y)=\begin{pmatrix}
                    0 & 0 & -1\\
                    0 &-1 &t^{-a}q\\
                   -1 &t^{-a}q&t^{-b}r
                \end{pmatrix}$$
Our strategy is to find elements $C,D\in\mathcal B$ such that $Cgp(Y)D\in N(K[t,t^{-1}])$.
We can then identify the Bruhat cell containing $gp(Y)$, and so identify the minimal Schubert variety containing $\psi_p(g,Y)$.
The choice of $C,D$ depends on the values of certain inequalities, which we divide into $4$ cases.
We draw here a decision tree showing the relationship between the inequalities and the choice $C,D$.

\tikzstyle{level 1}=[level distance=2.5cm, sibling distance=2cm]
\tikzstyle{level 2}=[level distance=3.5cm, sibling distance=1.3cm]

\tikzstyle{bag} = [text width=4em, text centered]
\tikzstyle{end} = [circle, minimum width=6pt,fill, inner sep=0pt]

\begin{tikzpicture}[grow=right, sloped]
\node[end] {}
    child {
        node[bag] {Case $1$}        
            edge from parent 
            node[above] {$r=0$}
    }
    child {
        node[end] {}        
        child {
                node[bag] {Case $1$}
                edge from parent
                node[below] {$\qquad b\leq a$}
            }
            child {
                node[bag] {Case $2$} 
                edge from parent
                node[above]  {$\qquad a<b<2a$}
            }
            child {
                node[end] {}
        child {
                node[bag] {Case $3$}
                edge from parent
                node[below] {$\qquad q^2+r=0$}
            }
        child {
                node[bag] {Case $4$}
                edge from parent
                node[above] {$q^2+r\neq 0$}
            }
                edge from parent
                node[above] {$\qquad 2a=b$}
            }
            child {
                node[bag] {Case $4$}
                edge from parent
                node[above] {$\qquad 2a<b$}
            }
        edge from parent         
            node[above] {$r\neq 0$}
    };
\end{tikzpicture}
\\
\\
A rational function in $t$ is implicitly equated with its Laurent power series at $0$. 
In particular, a rational function $f$ belongs to $A$ if and only if $f$ has no poles at $0$, i.e. its denominator is not divisible by $t$.
\begin{enumerate}
\item
If $r=0$ or $b\leq a$, let \begin{align*}
    C&= \begin{pmatrix}
        rt^{2a-b}+q^2   & qt^a                      & t^{2a}                        \\
        0               & -\dfrac{q}{rt^{2a-b}+q^2}  & -\dfrac{t^a}{rt^{2a-b}+q^2}    \\ 
        0               & 0                         & -\dfrac{1}{q}           
        \end{pmatrix}\\
    D&= \begin{pmatrix}
        1                           & 0             & 0                     \\
        \dfrac{qt^a}{rt^{2a-b}+q^2}  & 1             & -\dfrac{rt^{a-b}}{q}   \\  
        \dfrac{t^{2a}}{rt^{2a-b}+q^2}& 0             & 1
        \end{pmatrix} 
\end{align*}    
We compute \begin{align*}
    Cgp(Y)D=    \begin{pmatrix}
                    -t^{2a} & 0         & 0          \\
                    0       & 0         & t^{-a}    \\
                    0       & t^{-a}    & 0 
                \end{pmatrix}
\end{align*}
It follows $gp(Y)\in\mathcal B\lambda_qs_2\mathcal B$, where $q=-2a\alpha_1^\vee-a\alpha_2^\vee$.
We calculate \begin{align*}
    l(\lambda_q)&=\lvert\alpha_1(q)\rvert+\lvert\alpha_2(q)\rvert+\lvert\alpha_1(q)+\alpha_2(q)\rvert\\
                &=3a+0+3a\\
                &=6a
\end{align*}
It follows from lemma \ref{count} that $l(\lambda_qs_2)>l(\lambda_q)=6a\geq 6$.
\item
Suppose $a<b<2a$. 
In particular, $a\geq2,b\geq3$. 
Let \begin{align*}
    C&= \begin{pmatrix}
            -rt^{2a-b}+q^2  & qt^a                      & t^{2a}                            \\
            0               & -\dfrac{r}{rt^{2a-b}+q^2} & \dfrac{-qt^{b-a}}{rt^{2a-b}+q^2}  \\
            0               & 0                         & \dfrac{1}{r} 
        \end{pmatrix} \\
    D&= \begin{pmatrix}
            1                               & 0                     & 0 \\
            \dfrac{t^aq}{q^2+t^{2a-b}r}     & 1                     & 0 \\
            \dfrac{t^{2a}}{q^2+t^{2a-b}r}   & -\dfrac{qt^{b-a}}{r}  & 1 
        \end{pmatrix} \end{align*}    
We compute\begin{align*}
    Cgp(Y)R=\begin{pmatrix}
                t^{2a}  & 0         & 0         \\
                0       & t^{b-2a}  & 0         \\
                0       & 0         & t^{-b} 
                \end{pmatrix}\end{align*}
It follows $gp(Y)\in\mathcal B\lambda_q\mathcal B$, where $q=-2a\alpha_1^\vee-b\alpha_2^\vee$.
We calculate\begin{align*}
    l(\lambda_q)&=\lvert\alpha_1(q)\rvert+\lvert\alpha_2(q)\rvert+\lvert\alpha_1(q)+\alpha_2(q)\rvert\\
                &=(4a-b)+(2b-2a)+(2a+b)\\
                &=4a+2b\geq 14
\end{align*}
\item
If $b=2a$ and $q^2+r=0$, let \begin{align*}
    C= \begin{pmatrix}
            q   & t^a   & 0             \\
            0   & q     & t^a           \\
            0   & 0     & \dfrac{1}{q^2} 
        \end{pmatrix},\qquad
    D= \begin{pmatrix}
            1                   & 0             & 0 \\
            0                   & 1             & 0 \\
            -\dfrac{t^{2a}}{q^2} & \dfrac{t^a}{q} & 1 
        \end{pmatrix} \end{align*}    
We compute\begin{align*}
    Cgp(Y)D=    \begin{pmatrix}
                    0   &-t^a   & 0         \\
                    -t^a& 0     & 0         \\
                    0   & 0     &-t^{-2a} 
                \end{pmatrix}
\end{align*}
It follows $gp(Y)\in\mathcal B\lambda_qs_1\mathcal B$, where $q=-a\alpha_1^\vee-2a\alpha_2^\vee$.
Similar to the first case, we see that $l(\lambda_qs_1)>6$.
\item
Suppose either $b>2a$, or $b=2a$ and $r+q^2\neq0$. 
In particular, $r+q^2t^{b-2a}\neq0$ and $b\geq2$.
Let \begin{align*}
    C&= \begin{pmatrix}
            -r-q^2t^{b-2a}  &-qt^{b-a}                  &-t^b                               \\
            0               &-\dfrac{r}{r+q^2t^{b-2a}}  & \dfrac{qt^{b-a}}{r+q^2t^{b-2a}}   \\
            0               & 0                         & \dfrac{1}{r} 
        \end{pmatrix}\\
    D&= \begin{pmatrix}
            1                               & 0                     & 0 \\
            \dfrac{qt^{b-a}}{q^2t^{b-2a}+r} & 1                     & 0 \\
            \dfrac{t^b}{q^2t^{b-2a}+r}      & -\dfrac{qt^{b-a}}{r}  & 1 
        \end{pmatrix} 
\end{align*}    
We compute\begin{align*}
    Cgp(Y)D= \begin{pmatrix}
                    t^b & 0 & 0     \\
                    0   & 1 & 0     \\
                    0   & 0 & t^{-b} 
                \end{pmatrix}
\end{align*}
It follows $gp(Y)\in\mathcal B\lambda_q\mathcal B$, where $q=-b\alpha_1^\vee-b\alpha_2^\vee$.
We calculate\begin{align*}
    l(\lambda_q)&=\lvert\alpha_1(q)\rvert+\lvert\alpha_2(q)\rvert+\lvert\alpha_1(q)+\alpha_2(q)\rvert\\
                &=b+b+2b\\
                &=4b\geq 8
\end{align*}
\end{enumerate}
\end{proof}


\end{document}